\theoremstyle{plain}
\newtheorem{theoreme}{Théorème}
\newtheorem{coro}{Corollaire}
\newtheorem{lemme}{Lemme}
\newtheorem{prop}{Proposition}
\newtheorem{thmext}{Théorème}
\theoremstyle{definition}
\theoremstyle{remark}
\newtheorem*{remarque}{Remarque}
\newcommand{\cond}{{\rm cond}}
\newcommand{\prim}{\text{ primitif}}
\renewcommand{\bar}{\overline}
\renewcommand{\setminus}{\smallsetminus}
\newcommand{\summ}{\mathop{\sum\sum}}
\newcommand{\summm}{\mathop{\sum\sum\sum}}
\newcommand{\summmm}{\mathop{\sum\sum\sum\sum}}
\newcommand{\e}{{\rm e}}
\newcommand{\dd}{{\rm d}}
\newcommand{\cB}{{\mathcal B}}
\newcommand{\cE}{{\mathcal E}}
\newcommand{\cC}{{\mathcal C}}
\renewcommand{\hat}{\widehat}
\newcommand{\cD}{{\mathcal D}}
\newcommand{\cI}{{\mathcal I}}
\newcommand{\cS}{{\mathcal S}}
\newcommand{\cK}{{\mathcal K}}
\newcommand{\cX}{{\mathcal X}}
\newcommand{\cR}{{\mathcal R}}
\newcommand{\cZ}{{\mathcal Z}}
\newcommand{\ee}{{\varepsilon}}
\newcommand{\bfH}{{\mathbf H}}
\newcommand{\bfN}{{\mathbf N}}
\newcommand{\bfZ}{{\mathbf Z}}
\newcommand{\bfR}{{\mathbf R}}
\newcommand{\bfC}{{\mathbf C}}
\newcommand{\bfUn}{{\mathbf 1}}
\newcommand{\vth}{{\vartheta}}
\newcommand{\vphi}{{\varphi}}
\renewcommand{\tilde}{\widetilde}
\newcommand{\card}{{\rm card\ }}
\newcommand{\floor}[1]{{\left\lfloor {#1} \right\rfloor}}
\renewcommand{\mod}[1]{\ ({\rm mod\ }#1)}
\renewcommand\Re{\operatorname{\mathfrak{Re}}}
\newcommand_[1]{\ensuremath{\sb{\substack{#1}}}}
\numberwithin{equation}{section}
\title[Théorèmes de type Fouvry--Iwaniec pour les entiers friables]{Théorèmes de type Fouvry--Iwaniec \\ pour les entiers friables}
\keywords{friable integers, equidistribution in arithmetic progressions, dispersion method}
\author{Sary Drappeau}
\email{drappeaus@dms.umontreal.ca}
\address{CRM -- Université de Montréal, Pavillon André Aisenstadt, 2920 ch. de la Tour, Montréal, H3T~1J4 QC, Canada}
\date{\today}
\begin{abstract}
An integer~$n$ is said to be~$y$-friable if its largest prime factor~$P^+(n)$ is less than~$y$. In this paper, it is shown that the $y$-friable integers less than $x$ have a weak exponent of distribution at least~$3/5-\ee$ when $(\log x)^c\leq x\leq x^{1/c}$ for some $c=c(\ee)\geq 1$, that is to say, they are well distributed in the residue classes of a fixed integer~$a$, on average over moduli~$\leq x^{3/5-\varepsilon}$ for each fixed~$a\neq 0$ and~$\varepsilon>0$. We apply this to the estimation of
the sum $\sum_{2\leq n\leq x, P^+(n)\leq y}\tau(n-1)$ when $(\log x)^c\leq y$. This follows and improves on previous work of Fouvry and Tenenbaum. Our proof combines the dispersion method of  Linnik in the setting of Bombieri, Fouvry, Friedlander and Iwaniec, and recent work of Harper on friable integers in arithmetic progressions.
\end{abstract}
\begin{document}

\maketitle
\tableofcontents

\section{Introduction}

Un entier~$n$ est dit~\emph{$y$-friable} si son plus grand facteur premier~$P(n)$ est inférieur ou égal à~$y$, avec la convention~$P(1)=1$.
L'ensemble des entiers inférieurs ou égaux à~$x$ qui sont~$y$-friables est noté~$S(x, y)$, et on pose~$\Psi(x, y) = \card S(x, y)$.
L'étude de~$S(x, y)$ est l'objet d'abondantes études, les outils variant selon la taille de~$y$ par rapport à~$x$ : on réfère le lecteur à l'article de survol~\cite{TH93}. On s'intéresse ici à la répartition de~$S(x, y)$ dans les progressions arithmétiques. On définit
\[ \Psi(x, y ; a, q) := \card\{n \in S(x, y)\ |\ n\equiv a \mod{q} \},\]
\[ \Psi_q(x, y) := \card \{n\in S(x,y)\ |\ (n,q)=1\} .\]
Harper~\cite{HarperKS12}, précisant un résultat de Soundararajan~\cite{Sound08}, montre que pour tout~$\ee>0$ fixé, lorsque~$(a, q)=1$,
\begin{equation}\label{estim-psi-ap-single}
\Psi(x,y;a,q) \sim \frac{\Psi_q(x, y)}{\vphi(q)} \quad (\log x/\log q\to\infty, q\leq y^{4\sqrt{e}-\ee}, y\geq y_0(\ee)).
\end{equation}
Soundararajan conjecture que cette estimation a lieu pour tout~$A>0$ fixé lorsque~$q\leq y^A$.
Comme il est noté dans~\cite{Granville93b} et~\cite{Sound08}, établir cette estimation pour un~$A>4\sqrt{e}$
aurait des conséquences intéressantes sur le problème du plus petit résidu non quadratique modulo~$p$,
au sujet duquel on réfère le lecteur aux travaux de Burgess~\cite{Burgess}.

La situation présente des similarités avec la suite des entiers premiers : s'il est délicat de majorer le terme
\[ E(x, y ; a, q) := \Psi(x, y ; a, q) - \frac{1}{\vphi(q)} \Psi_q(x, y) \]
uniformément dans un large domaine en~$x$, $y$ et~$q$, on peut espérer obtenir des résultats en moyenne, du même type que le résultat
de Bombieri--Vinogradov, c'est-à-dire des majorations de la somme
\[ \sum_{q\leq Q}\max_{(a, q)=1}|E(x, y; a, q)| .\]
Dans la plupart des applications, et plus particulièrement celles liées aux techniques de crible, ce sont ces objets qui interviennent.
La question de l'uniformité en~$Q$ est cruciale.

On définit les quantités suivantes, qui interviennent fréquemment dans l'étude des entiers friables :
\[ u := \frac{\log x}{\log y}, \qquad H(u) := \exp\{u/(\log (u+1))^2\} .\]
Harper~\cite{HarperBV2012}, précisant des résultats de Fouvry--Tenenbaum~\cite{FT96}, Granville~\cite{AG93} et Wolke~\cite{Wolke},
montre qu'il existe~$c, \delta>0$ tels que pour tout~$A\geq0$, lorsque~$(\log x)^c\leq y\leq x$ et~$Q\leq \sqrt{\Psi(x, y)}$, on ait
\begin{equation}\label{majo-harper}
\sum_{q\leq Q}\max_{(a, q)=1}|E(x, y; a, q)| \ll_A \Psi(x, y)\big\{H(u)^{-\delta}(\log x)^{-A}+y^{-\delta}\big\} + Q\sqrt{\Psi(x, y)}(\log x)^{7/2}
.\end{equation}
La constante implicite est effective si~$A<1$\footnote{Cela découle par exemple de la proposition~II.8.30 de~\cite{Tene2007}}.
Dans les applications, il est important que le majorant soit~$o(\Psi(x, y))$,
et le résultat de Harper assure cela dès que~$Q=o(\sqrt{\Psi(x, y)}(\log x)^{-7/2})$. De même que dans la situation
du théorème de Bombieri--Vinogradov, il serait très intéressant d'avoir un majorant qui soit~$o(\Psi(x, y))$
lorsque~$Q$ est de l'ordre de~$\sqrt{x}$.

Dans le contexte des nombres premiers et du théorème de Bombieri--Vinogradov, de tels résultats peuvent être obtenus
si l'on fixe l'entier~$a$ dont on détecte la classe de congruence. Cela trouve son origine dans des travaux de Fouvry--Iwaniec
et Fouvry~\cite{FI80, Fouvry82, Fouvry84} et est étudié par Bombieri--Friedlander--Iwaniec dans une série d'articles~\cite{BFI,BFI2,BFI3}.
Il est par exemple montré dans~\cite[main theorem]{BFI2} que pour tout~$a\neq 0$ et~$A>0$, il existe~$B=B(A)$ tel que l'on ait
\[ \sum_{q\leq \sqrt{x}(\log x)^A\\(q, a)=1}\Big|\pi(x; a, q) - \frac{{\rm li} x}{\vphi(q)}\Big|\ll_{a, A} x\frac{(\log\log x)^B}{(\log x)^2} \]
où~$\pi(x; a, q)$ désigne le nombre de nombres premiers inférieurs à~$x$ et congrus à~$a$ modulo~$q$.
On pourra consulter l'article de Fiorilli~\cite{Fiorilli} pour plus de références et des résultats récents à ce sujet.

Fouvry et Tenenbaum~\cite[théorèmes~2 et~3]{FT96} montrent par des méthodes similaires le résultat suivant.
\begin{thmext}\label{thm-bfi-ft}
Pour tout~$\ee>0$, il existe~$\delta>0$ tel que lorsque~$a\in\bfZ\setminus\{0\}$ et~$1\leq y\leq x^\delta$, pour tout~$A>0$, on ait
\begin{equation}\label{estim-bfi-ft} \sum_{q\leq x^{3/5-\ee}\\(q, a)=1}|E(x, y; a, q)| \ll_A \frac x{(\log x)^A} \qquad (|a|\leq x^\delta),\end{equation}
\[ \sum_{q\leq x^{6/11-\ee}\\(q, a)=1}|E(x, y; a, q)| \ll_A \frac x{(\log x)^A} \qquad (|a|\leq x). \]
\end{thmext}
Il est intéressant dans ce résultat que~$q$ est autorisé à prendre des valeurs de l'ordre de~$\sqrt{x}$. En revanche, le majorant
n'est~$O(\Psi(x, y))$ que dans un domaine du type
\[ \exp\Big\{(\eta+o(1))\frac{\log x\log\log\log x}{\log\log x}\Big\} \leq y \]
lorsque~$x\to\infty$ et pour un certain~$\eta = \eta(A)$, ce qui restreint l'uniformité en~$x$ et~$y$ dans les applications.
On montre ici le résultat suivant.
\begin{theoreme}\label{thm-BFI-Sxy}
Pour tout~$\ee>0$ fixé, il existe~$c, \delta>0$ pouvant dépendre de~$\ee$ tels que lorsque~$x, y$ vérifient~$(\log x)^c \leq y \leq x^{1/c}$ pour tout~$A\geq0$, on ait
\begin{equation}\label{estim-Sxy-BV}
\sum_{q \leq x^{3/5-\ee}\\(q, a_1a_2)=1} |E(x, y ; a_1\bar{a_2}, q)| \ll_A \Psi(x, y)\big\{H(u)^{-\delta}(\log x)^{-A} + y^{-\delta}\big\}
\qquad (|a_1|, |a_2|\leq x^\delta),
\end{equation}
\begin{equation}\label{estim-Sxy-BV-unif}
\sum_{q \leq x^{6/11-\ee}\\(q, a_1a_2)=1} |E(x, y ; a_1\bar{a_2}, q)| \ll_A \Psi(x, y)\big\{H(u)^{-\delta}(\log x)^{-A} + y^{-\delta}\big\}
\qquad (|a_1|\leq x^{1-\ee}, |a_2|\leq x^\delta)
\end{equation}
lorsque~$a_1, a_2\in\bfZ\smallsetminus\{0\}, (a_1, a_2)=1$, et où la constante implicite est effective si~$A<1$. Dans les membres de gauche de ces estimations,~$\bar{a_2}$ désigne l'inverse de~$a_2$ modulo~$q$.
\end{theoreme}
Les majorants dans~\eqref{estim-Sxy-BV} et~\eqref{estim-Sxy-BV-unif} sont toujours~$o(\Psi(x, y))$ lorsque~$x, y\to\infty$ et~$A>0$
dans leur domaine de validité. Par rapport aux précédents exemples de suites ayant un exposant de répartition supérieur à~$1/2$ pour
une classe de congruence fixée, la suite d'entiers~$S(x, (\log x)^c)$ est particulièrement peu dense puisque~$\Psi(x, (\log x)^c)=x^{1-1/c+o(1)}$
lorsque~$c\geq 1$ est fixé et~$x\to\infty$ (voir la formule~\eqref{HT-psi} {\it infra}).

Par un argument de découpage dichotomique similaire aux calculs de la section~3.1 de~\cite{FT96}, il découle la version plus forte suivante :
\begin{coro}\label{coro-max}
Pour tout~$\ee>0$, il existe~$c, \delta>0$ pouvant dépendre de~$\ee$ tels que lorsque~$x, y$ vérifient~$(\log x)^c\leq y\leq x^{1/c}$, pour tout~$A\geq -1$ fixé, on ait
\[
\sum_{q \leq x^{3/5-\ee}\\(q, a_1a_2)=1} \max_{z\leq x}|E(z, y ; a_1\bar{a_2}, q)| \ll_A \Psi(x, y)\big\{H(u)^{-\delta}(\log x)^{-A} + y^{-\delta}\big\}
\qquad (|a_1|, |a_2|\leq x^\delta),
\]
\[
\sum_{q \leq x^{6/11-\ee}\\(q, a_1a_2)=1} \max_{z\leq x}|E(z, y ; a_1\bar{a_2}, q)| \ll_A \Psi(x, y)\big\{H(u)^{-\delta}(\log x)^{-A} + y^{-\delta}\big\}
\qquad (|a_1|\leq x^{1-\ee}, |a_2|\leq x^\delta),
\]
la constante implicite étant ineffective si~$A>-1$.
\end{coro}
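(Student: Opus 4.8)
The plan is to reduce the supremum over~$z$ to controlling $E(z,y;a_1\bar{a_2},q)$ at a finite set of scales, using that $z\mapsto\Psi(z,y;a_1\bar{a_2},q)$ and $z\mapsto\Psi_q(z,y)$ are nondecreasing. The one genuine obstacle is that Theorem~\ref{thm-BFI-Sxy}, used at a scale $X\le x$, only controls moduli up to $X^{3/5-\ee'}$, so it may be invoked only for $X$ close to~$x$; the lower part of the $z$-range has to be handled by a trivial estimate, and making that estimate acceptable is what forces~$c$ to be large. Concretely, I would fix~$\ee>0$, set $\ee':=\ee/2$, let $c_0,\delta_0$ be the constants produced by Theorem~\ref{thm-BFI-Sxy} for the parameter~$\ee'$, put $\delta:=\delta_0/5$ and $\sigma:=1-2/c$, and take $c=c(\ee)$ larger than the finitely many explicit thresholds occurring below (so that in particular $X^{3/5-\ee'}\ge x^{3/5-\ee}$ for all $X\in[x^{\sigma},x]$, $y\le X^{1/c_0}$, $(\log X)^{c_0}\le y$, and $|a_i|\le X^{\delta_0}$ for such~$X$).

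For $z\le x^{\sigma}$ one bounds $\Psi(z,y;a_1\bar{a_2},q)\le z/q+1$ (valid irrespective of the size of~$a_1$, since $a_1\bar{a_2}$ is merely a residue class mod~$q$) and $\Psi_q(z,y)/\vphi(q)\le z/\vphi(q)$, so that
\[
\sum_{q\le x^{3/5-\ee}}\ \max_{z\le x^{\sigma}}\bigl|E(z,y;a_1\bar{a_2},q)\bigr|\ \ll\ x^{\sigma}\log x+x^{3/5-\ee},
\]
which is $\ll\Psi(x,y)\,y^{-\delta}$ by~\eqref{HT-psi} once~$c$ is large enough (here $3/5-\ee<1-1/c$ is used). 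For $z\in(x^{\sigma},x]$ I would cut $(x^{\sigma},x]$ into $N\ll\eta^{-1}\log x$ intervals $(X_{j+1},X_j]$ with $X_j/X_{j+1}=1+\eta$ and $X_0=x$, where $\eta:=H(u)^{-\delta}(\log x)^{-A-1}+y^{-\delta}(\log x)^{-1}$ (a genuinely finer‑than‑dyadic cut, which is why the correction term below can be controlled). Monotonicity gives, for $z\in(X_{j+1},X_j]$,
\[
\bigl|E(z,y;a_1\bar{a_2},q)\bigr|\ \le\ \max_{i\in\{j,j+1\}}\bigl|E(X_i,y;a_1\bar{a_2},q)\bigr|+\frac{\Psi_q(X_j,y)-\Psi_q(X_{j+1},y)}{\vphi(q)},
\]
so, after summing over~$q$, it remains to bound $(\mathrm{i})=\sum_{0\le j\le N}\sum_{q\le x^{3/5-\ee}}|E(X_j,y;a_1\bar{a_2},q)|$ and $(\mathrm{ii})=\sum_{q\le x^{3/5-\ee}}\max_{0\le j<N}\bigl(\Psi_q(X_j,y)-\Psi_q(X_{j+1},y)\bigr)/\vphi(q)$.

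For $(\mathrm{ii})$ the input is a short‑interval estimate for friable integers in the range $(\log x)^c\le y$: one has $\Psi_q(X(1+\eta),y)-\Psi_q(X,y)\le\Psi(X(1+\eta),y)-\Psi(X,y)\ll\eta\,\Psi(x,y)$ for $X\in[x^{\sigma},x]$ (the above~$\eta$ being well above the threshold for such estimates, as $\max(H(u)^{-\delta},y^{-\delta})\gg\exp(-\sqrt{\log x})$), whence $(\mathrm{ii})\ll\eta\,\Psi(x,y)\sum_{q\le x^{3/5-\ee}}\vphi(q)^{-1}\ll\eta\,\Psi(x,y)\log x\ll\Psi(x,y)\bigl\{H(u)^{-\delta}(\log x)^{-A}+y^{-\delta}\bigr\}$ by the choice of~$\eta$. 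For $(\mathrm{i})$ one applies Theorem~\ref{thm-BFI-Sxy} at each $X_j\in[x^{\sigma},x]$ with the parameter~$\ee'$ and exponent $A':=2A+2$ (its hypotheses all holding for~$c$ large); writing $u_j:=(\log X_j)/\log y\ge u/2$ gives $H(u_j)^{-\delta_0}\le H(u)^{-\delta_0/2}$, and since $N\ll\min\bigl(H(u)^{\delta}(\log x)^{A+2},\,y^{\delta}(\log x)^2\bigr)$ with $\delta=\delta_0/5<\delta_0/2$, summing the $N$ resulting bounds and using $y\ge(\log x)^c$ yields $(\mathrm{i})\ll_A\Psi(x,y)\bigl\{H(u)^{-\delta}(\log x)^{-A}+y^{-\delta}\bigr\}$.

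The point I expect to demand the most care is the bookkeeping in~$(\mathrm{i})$: the number $N\asymp\eta^{-1}\log x$ of scales can itself be super‑polynomially large (when~$u$ is of intermediate size both $H(u)^{-\delta}$ and $y^{-\delta}$, hence~$\eta$, are super‑polynomially small), and this must be compensated by the per‑scale savings $H(u)^{-\delta_0/2}(\log x)^{-A'}+y^{-\delta_0}$ of Theorem~\ref{thm-BFI-Sxy}; it succeeds precisely because $\delta<\delta_0/2$ for the $H(u)$‑part and because~$c$ is large enough that the polynomial loss from~$N$ is absorbed into $y^{-\delta}$ for the $y^{-\delta_0}$‑part. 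With $A'=2A+2$ one has $A'<1$ when $A=-1$, so the constant is effective there by the effectivity clause of Theorem~\ref{thm-BFI-Sxy}, whereas for larger~$A$ it becomes ineffective; this accounts for the range $A\ge-1$ and the effectivity statement. The second estimate is obtained identically, with $x^{6/11-\ee}$ replacing $x^{3/5-\ee}$ and $|a_1|\le x^{1-\ee}$ replacing $|a_1|\le x^{\delta}$; the trivial bound $\Psi(z,y;a_1\bar{a_2},q)\le z/q+1$ is insensitive to the size of~$a_1$, so no modification is needed.
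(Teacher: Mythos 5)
Your reduction (trivial bound for small $z$, a finer-than-dyadic subdivision of $(x^{\sigma},x]$, monotonicity to pass to a finite set of scales, Th�or�me~\ref{thm-BFI-Sxy} at each scale) is exactly the paper's strategy, and your bookkeeping for the sum~$(\mathrm{i})$ over the $N$ scales is sound. The gap is in~$(\mathrm{ii})$: the estimate $\Psi(X(1+\eta),y)-\Psi(X,y)\ll\eta\,\Psi(x,y)$ is asserted as a ``short-interval estimate'' but is neither proved nor available at this scale and in this range of~$y$. The tools at hand give only
\[ \Psi(X(1+\eta),y)-\Psi(X,y)\ \leq\ \Psi(\eta X,y)\ \ll\ \Big(\frac{\eta X}{x}\Big)^{\alpha}\Psi(x,y)\ \leq\ \eta^{\alpha}\,\Psi(x,y), \]
the first inequality being le th�or�me~4 de~\cite{Hild85} and the second le point~(i) du Lemme~\ref{bt05}; this is precisely what the paper uses. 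Since $\alpha=1-(1+o(1))\log(u+1)/\log y<1$, the exponent $\alpha$ cannot be removed (a linear-in-$\eta$ bound would amount to equidistribution of $y$-friable integers in intervals of relative length $(\log x)^{-O(1)}$ with $y$ as small as $(\log x)^{c}$, which is far beyond what is known; the local estimates of~\cite{BT05} carry an error $O(1/u)$ that dominates your~$\eta$). With the correct bound $\eta^{\alpha}$, your choice $\eta=H(u)^{-\delta}(\log x)^{-A-1}+y^{-\delta}(\log x)^{-1}$ fails: the $H(u)$-part of $(\mathrm{ii})$ becomes $\ll H(u)^{-\delta\alpha}(\log x)^{1-(A+1)\alpha}\Psi(x,y)$, and $1-(A+1)\alpha\leq -A$ forces $A\leq-1$; so for every $A>-1$ the increment term is not $\ll\Psi(x,y)H(u)^{-\delta'}(\log x)^{-A}$, no matter how large $c$ is.

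The repair is the one the paper carries out: take the subdivision ratio $\Delta=1+H(u)^{-\delta/2}(\log x)^{-A/2}+y^{-\delta/2}$ (exponents halved, so that $J\ll(\log x)\{H(u)^{\delta/2}(\log x)^{A/2}+y^{\delta/2}\}$ still keeps $(\mathrm{i})$ under control), bound the increments by $\{\Delta^{-j}(\Delta-1)\}^{\alpha}\Psi(x,y)$, impose $c$ large enough that $\alpha\geq2/3$, and accept the degraded exponents $H(u)^{-\delta\alpha/2}(\log x)^{-A\alpha/2+1}$; since $A$ is an arbitrary fixed parameter one recovers the stated form by re-indexing $A$. Your remark on effectivity survives this modification (the re-indexed exponent is $<1$ exactly when the target $A$ equals $-1$), but as written your proof of $(\mathrm{ii})$ does not go through.
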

Une analyse plus fine des calculs de Harper menant au Lemme~\ref{lemme-harper} permet de considérer plus généralement la somme
\[ \sum_{q \leq x^{3/5-\ee}\\(q, a_1a_2)=1} \lambda(q)\max_{z\leq x}|E(z, y ; a_1\bar{a_2}, q)| \]
où le poids~$\lambda(q)\geq 0$ vérifie~$\lambda(q)=q^{o(1)}$ lorsque~$q\to\infty$, ainsi que des hypothèses de majoration, notamment sur sa valeur moyenne. Cela n'est pas étudié ici.

Ces résultats permettent par exemple d'estimer la somme
\[ T(x, y) := \sum_{n\in S(x, y)\\n>1}\tau(n-1) .\]
Ce problème peut être vu comme un analogue friable du problème des diviseurs de Titchmarsh ({\it cf.} le corollary~1 de~\cite{BFI}
et le corollaire~2 de~\cite{Fouvry85}). Il est étudié par Fouvry et Tenenbaum dans~\cite{FT90}, qui obtiennent en particulier le résultat suivant.
Dans toute la suite, on désigne par~$\log_k x$ la $k$-ième itérée du logarithme évalué en~$x$.
\begin{thmext}\label{tit-ft}
Soit~$\eta>0$. Lorsque~$x$ et~$y$ sont suffisamment grands et vérifient l'inégalité
\[ \exp\{\eta\log x\log_3 x/\log_2 x\}\leq y\leq x, \]
on a
\begin{equation*} T(x, y) = \Psi(x, y)\log x\Big\{1 + O\Big(\frac{\log(u+1)}{\log y}\Big)\Big\} .\end{equation*}
\end{thmext}
L'énoncé de notre résultat fait intervenir une notation de~\cite{HT84}. Lorsque~$2\leq y\leq x$, on définit implicitement
le \emph{point-selle} $\alpha(x, y)\in[0, 1]$ par l'équation
\begin{equation}\label{def-alpha} \sum_{p\leq y}\frac{\log p}{p^\alpha-1} = \log x .\end{equation}
Lorsque~$y, u\to\infty$ avec~$(\log x)^{2}\leq y\leq x$, on a~$1-\alpha\sim (\log u)/\log y$, et lorsque~$x, y\to\infty$, on~a
\begin{equation}\label{HT-psi} \Psi(x, y) = x^{\alpha + o(1)} .\end{equation}
\begin{theoreme}\label{tit-bfi}
Il existe~$c>0$ tel que lorsque~$(\log x)^c\leq y\leq x$, on ait
\begin{equation}\label{estim-tit-ft}
T(x, y) = C(\alpha)\Psi(x, y)\log x\Big\{1 + O\Big(\min\Big\{\frac1u,\frac{\log(u+1)}{\log y}\Big\}\Big)\Big\}
\end{equation}
avec
\[ C(\alpha) := \prod_p\Big(1-\frac{p^{-\alpha}-p^{-1}}{p-1}\Big) .\]
\end{theoreme}
\begin{remarque}
Le terme d'erreur~$O(1/u)$ est typique de l'utilisation de la méthode du col dans l'étude des entiers friables.
On a~$C(\alpha)\asymp 1$, ainsi que
\[ C(\alpha) = 1 + O\Big(\frac{\log(u+1)}{\log y}\Big) ,\]
et pour tout~$c>0$, on a~$1/u\gg\log(u+1)/\log y$ lorsque~$\log y\gg\sqrt{\log x\log_2x}$.

La constante implicite dans l'estimation~\eqref{estim-tit-ft} obtenue par la méthode proposée ici est effective
lorsque~$H(u)^\delta\gg \log y/\log(u+1)$, ce qui est réalisé par exemple lorsque~$\log y\ll \log x/(\log_2x(\log_3x)^2)$.

La dépendance du terme principal en~$\alpha$ est due à un biais dans la répartition des entiers friables dans les progressions arithmétiques,
biais d'autant plus important que~$\alpha$ (donc~$y$) est petit. Par exemple, un entier~$y$-friable choisi au hasard entre~$1$ et~$x$
est pair avec une probabilité~$2^{-\alpha}+o(1)$ lorsque~$x\to\infty$, {\it cf.} le Lemme~\ref{bt05} {\it infra}.

De même que dans~\cite{FT90}, la preuve que l'on présente ici permet l'obtention d'un développement de~$T(x, y)$
selon les puissances négatives de~$\log y$. On définit le domaine
\begin{equation}\tag{$H_\ee$} 3\leq\exp\{(\log_2x)^{5/3+\ee}\}\leq y\leq x .\end{equation}
Pour tout~$k\geq 0$ fixé, il existe des fonctions~$(u\mapsto\sigma_i(u))_{i\geq 0}$,
définies sur~$[1,\infty[$ par la formule~(1.12) de~\cite{FT90} (en particulier~$\sigma_0(u)=\rho(u)$ la fonction de Dickman, définie en~\eqref{def-dickman} {\it infra}), telles qu'en posant pour tout~$z>1$,
\[ \cC(z) = \cC_k(z) := \bigcup_{j=0}^k[j+\min\{1,(k+1-j)(\log z)/z\}, j+1]\cup[k+1, \infty[ ,\]
on ait pour~$(x, y)\in(H_\ee)$ et~$u\in\cC((\log y)/2)$ l'estimation
\[ T(x, y) = x\log x\Big\{\sum_{j=0}^k\frac{\sigma_j(u)}{(\log y)^j}+O_{\ee,k}\Big(\frac{|\rho^{(k+1)}(u)|}{(\log y)^{k+1}}\Big) \Big\}.\]
\end{remarque}

\begin{acknowledgements}
Ce travail a été réalisé dans la cadre de la thèse de doctorat de l'auteur à l'Université Paris Diderot -- Paris 7. Il tient à exprimer sa plus profonde reconnaissance à son directeur de thèse Régis de la Bretèche et à Étienne Fouvry pour leurs conseils et leurs encouragements durant la rédaction de ce travail, ainsi que son extrême gratitude au rapporteur anonyme pour sa relecture attentive et des remarques ayant permis d'améliorer ce manuscrit.
\end{acknowledgements}

\section{Lemmes}

\subsection{Entiers friables}

Dans~\cite{Hild86}, Hildebrand étudie la fonction~$\Psi(x, y)$ en itérant une équation fonctionelle. On rappelle que la quantité~$u$ est liée à~$x$ et~$y$ par la relation~$u=(\log x)/\log y$, et on définit la fonction de Dickman~$\rho(u)$ pour tout~$u\geq 0$ par
\begin{equation}\label{def-dickman} \rho'(u)+u\rho(u-1)=0, \quad (u\geq 1) \end{equation}
avec la condition initiale~$\rho(u)=1$ pour~$u\in[0,1]$. On prolonge~$\rho$ à~$\bfR$ par~$\rho(u)=0$ pour~$u<0$.
Lorsque~$(x, y)\in(H_\ee)$, on a
\begin{equation}\label{sim-psi-hild}
\Psi(x, y) = x\rho(u)\Big\{1+O_\ee\Big(\frac{\log(u+1)}{\log y}\Big)\Big\}
.\end{equation}
Le terme d'erreur est optimal pour le terme principal~$x\rho(u)$. Saias~\cite{Saias} montre que l'on peut préciser cette estimation
au prix d'un terme principal faisant intervenir la fonction~$\rho$ de façon plus fine. On définit pour~$2\leq y\leq x$,
\[ \Lambda(x, y) := \begin{cases}x\int_{0-}^{+\infty}\rho(u-v)\dd(\floor{y^v}/y^v)&\text{ si }x\not\in\bfZ\\
\Lambda(x+0,y)&\text{ sinon,}\end{cases} \]
ainsi que~$L_\ee(y) := \exp\{(\log y)^{3/5-\ee}\}$. Lorsque~$(x,y)\in H_\ee$, on a
\[ \Psi(x, y) = \Lambda(x, y)\big\{1+O(L_\ee(y)^{-1})\big\} .\]
Le domaine de validité de l'estimation~\eqref{sim-psi-hild} a un lien direct avec le terme d'erreur dans le théorème des nombres premiers.
Ainsi, la forme du domaine~$(H_\ee)$ est liée à la région sans zéro de Vinogradov--Korobov~\cite[formule~(II.3.64)]{Tene2007}
tandis que Hildebrand~\cite{Hild84} montre que la validité de l'estimation
\[ \Psi(x, y) = x\rho(u)O_\ee(y^\ee), \qquad ((\log x)^{2+\ee}\leq y) \]
pour tout~$\ee>0$ est équivalente à l'hypothèse de Riemann.

Si l'on concède de travailler avec un terme principal dépendant moins explicitement de~$x$ et~$y$, il est possible d'obtenir une
estimation valable dans un domaine beaucoup plus étendu. On pose pour~$s\in\bfC, \Re s>0$,
\[ \zeta(s, y) := \prod_{p\leq y}(1-p^{-s})^{-1}, \quad \phi_2(s, y) := \sum_{p\leq y}\frac{p^s(\log p)^2}{(p^s-1)^2}. \]
Alors Hildebrand et Tenenbaum~\cite[theorem~1]{HT84} montrent que lorsque~$2\leq y\leq x$, on a
\begin{equation}\label{sim-psi-ht}
\Psi(x, y) = \frac{x^\alpha\zeta(\alpha, y)}{\alpha\sqrt{2\pi\phi_2(\alpha, y)}}\Big(1+O\Big(\frac1u+\frac{\log y}y\Big)\Big)
.\end{equation}
La quantité~$\phi_2(\alpha, y)$ vérifie lorsque~$2\leq y\leq x$ l'estimation ({\it cf.} \cite[theorem~2]{HT84})
\[ \phi_2(\alpha, y)=\Big(1+\frac{\log x}y\Big)\log x\log y\Big(1+O\Big(\frac1{\log(u+1)}+\frac1{\log y}\Big)\Big) .\]
La démonstration de Hildebrand et Tenenbaum de l'estimation~\eqref{sim-psi-ht} emploie la méthode du col. Celle-ci offre la possibilité
d'obtenir des résultats ``locaux'', permettant par exemple l'évaluation du rapport~$\Psi(x/d, y)/\Psi(x, y)$
dans des domaines en~$x, y$ et~$d$ où aucune approximation lisse de~$\Psi(x, y)$ n'est connue voire possible. Cela est étudié par
La~Bretèche et Tenenbaum dans~\cite{BT05}. Afin de citer leurs résultats on introduit quelques notations supplémentaires.
On définit
\[ g_m(\alpha) := \prod_{p|m}(1-p^{-\alpha}) \qquad (m\in\bfN)\]
et, par analogie avec la fonction~$\Lambda(x, y)$,
\[ \Lambda_m(x, y) := x\int_{0-}^{+\infty}\rho(u-v)\dd R_m(y^v) \]
\[ \text{où }\quad R_m(t) := \frac1t\Bigg(\sum_{n\leq t, (n, m)=1}1\Bigg) - \frac{\vphi(m)}m. \]
Le résultat suivant découle des théorème~2.4, 2.1 et de la formule~(4.1) de~\cite{BT05}. On y utilise la notation~$\omega(m)$ pour désigner le nombre de facteurs premiers distincts de~$m\in\bfN$.
\begin{lemme}\label{bt05}
\begin{enumerate}[(i)]
\item Lorsque~$2\leq y\leq x$ et~$1\leq d\leq x$, on a
\[ \Psi\Big(\frac x d, y\Big) \ll \frac1{d^\alpha}\Psi(x, y) .\]
\item Lorsque~$(\log x)^2\leq y\leq x$, $P^+(m)\leq y$ et~$\omega(m)\ll \sqrt{y}$, on a
\[ \Psi_m(x, y) = g_m(\alpha)\Psi(x, y)\Big\{1+O\Big(\frac{E_m(1+E_m)}{u}\Big)\Big\},\]
où, en notant~$\gamma_m := \log(\omega(m)+1)\log(u+1)/\log y$, la quantité~$E_m$ est définie par
\[ E_m:=(\log(u+1))^{-1}\big\{\exp(2\gamma_m)-1\big\} \]
\item Lorsque~$(x, y)\in H_\ee$, $x\geq 3$ et $P^+(m)\leq y$, on a
\[ \Psi_m(x, y) = \Lambda_m(x, y) + O\Big(\frac{\Psi(x, y)}{L_\ee(y)g_m(\alpha)}\Big) .\]
\end{enumerate}
\end{lemme}
\begin{remarque}
La Bretèche et Tenenbaum~\cite[formule~(2.22)]{BT05} montrent en fait une estimation pour le rapport~$\Psi(x/d, y)/\Psi(x, y)$ pour un
large domaine en les paramètres~$x, y, d$. On ne fera usage ici que de la majoration énoncée au point~(i).
\end{remarque}

\subsection{Méthode de dispersion}

On reprend dans ce travail la méthode adoptée dans~\cite{FI, BFI}, basée sur un calcul de ``dispersion'' d'après Linnik,
et qui correspond à un calcul de variance empirique. La méthode peut être résumée de la façon suivante :
\begin{enumerate}[(i)]
\item on approche la fonction~$\bfUn_{k\in S(x, y)}$ par des convolutions de la forme~$\sum_{mn\ell=k}\alpha_m\beta_n\lambda_\ell$,
\item par l'inégalité de Cauchy--Schwarz, on se ramène au cas où~$\alpha_m=f(m)$ est une fonction lisse,
\item on détecte la congruence~$mn\ell\equiv a_1\bar{a_2}\mod{r}$ par une formule sommatoire de Poisson, ce qui fait intervenir des sommes de Kloosterman,
\item on majore ces sommes, soit individuellement par la majoration de Weil~\cite{WeilExp}, soit en moyenne grâce à des résultats
de Deshouillers et Iwaniec~\cite{DI}.
\end{enumerate}

Dans tout ce qui suit, on note~$\e(z) = \e^{2\pi i z}$.

La transformée de Fourier d'une fonction~$f:\bfR\to\bfC$ intégrable est définie pour tout~$\eta\in\bfR$ par
\[ \hat f(\eta)=\int_{-\infty}^{\infty}f(\xi)\e(\xi\eta)\dd\xi .\]
Dans ce qui suit, les fonctions dont on considèrera la transformée de Fourier seront toujours~$\cC^\infty$ à support compact. On dispose
dans ce cas de la formule d'inversion
\[ f(\xi)=\int_{-\infty}^{\infty}\hat f(\eta)\e(-\xi\eta)\dd\eta .\]
Enfin, lorsque~$f$ est~$\cC^\infty$ à support dans~$[-M, M]$ avec~$|f^{(j)}|\ll_j M^{-j}$ pour tout~$j\geq 0$,
alors pour tout~$\eta\in\bfR\smallsetminus\{0\}$ et~$j\geq 0$, on a~$|\hat f(\eta)|\ll_j M^{1-j}\eta^{-j}$, ainsi que~$|\hat f(0)| \ll M$. En particulier, on a
\[ \frac1r\sum_{h\in\bfZ\setminus\{0\}}\Big|\hat f\Big(\frac h r\Big)\Big| \ll 1 \qquad (r\in\bfN).\]

Le lemme suivant est une formule sommatoire de Poisson effective, telle que formulée dans~\cite[lemma~2]{BFI}.
\begin{lemme}\label{lemme-poisson}
Soit~$M\geq 1$ et~$f:\bfR_+\to\bfC$ une fonction~$\cC^\infty$ à support compact inclus dans~$[-4M,4M]$,
telle que~$f^{(j)}(x) \ll_j M^{-j}$ lorsque~$x\in\bfR_+$ et~$j\geq 0$. Alors pour tous~$q, a\in\bfN$,~$\ee>0$ et~$H\geq q^{1+\ee}M^{-1}$, on a
\[\sum_{m\equiv a\mod q}f(m)=\frac 1 q \sum_{|h|\leq H}\hat f\Big(\frac h q\Big)\e(-ah/q) + O_\ee(q^{-1}).\]
\end{lemme}

Concernant la majoration de sommes de Kloosterman, on dispose du résultat suivant, qui découle de la majoration de Weil~\cite{WeilExp}.
La version énoncée ici est essentiellement le lemme~4 de~\cite{Fouvry82}.
\begin{lemme}\label{majo-kloo-complete}
Lorsque~$k,b,c,\ell\in\bfN$ et~$D\geq 1$, on a
\begin{equation}\label{somme-exp-weil} \sum_{d\leq D\\(d,c)=1}\e\left(b\frac{\bar{d}}{c}\right)
\ll\log(D+1)\sqrt{(b,c)}\tau(c)c^{1/2}+\frac{(b,c)}{c}D ,\end{equation}
\begin{equation}\label{somme-exp-weil-var} \sum_{d\leq D\\(d,ck)=1\\d\equiv 0\mod \ell}\frac{d}{\vphi(d)}\e\left(b\frac{\bar{d}}{c}\right)
\ll\Big(\log(D+1)^2\sqrt{(b,c)}\tau(c)c^{1/2}+\frac{(b,c)\log(\ell+1)}{\ell c}D\Big)2^{\omega(k)} .\end{equation}
\end{lemme}
Le terme~$\bar{d}$ en argument des exponentielles désigne l'inverse de~$d$ modulo~$c$. La fraction~$b\bar{d}/c$ est définie modulo~$1$, son exponentielle~$e(b\bar{d}/c)$ est donc bien définie.

Le résultat suivant, démontré dans~\cite[theorem~12]{DI}, intervient de façon cruciale. Il s'agit d'une majoration en moyenne
de sommes de Kloosterman. Sa démonstration repose sur la théorie des formes automorphes, et des informations sur le spectre du Laplacien
agissant sur les formes automorphes sur le quotient du demi-plan de Poincaré par les sous-groupes de congruence, $\Gamma\backslash\bfH$
(on réfère le lecteur au chapitre~16 de~\cite{kw} pour plus d'explications à ces sujets). La version que l'on énonce ici correspond au lemma~1
de~\cite{BFI}.
\begin{lemme}\label{kloo-mean}
Soit~$\ee>0$ et~$g_0:\bfR_+\times\bfR_+\to\bfC$ une fonction de classe~$\cC^\infty$ à support compact. Pour tous réels positifs~$C, D, N, R, S$
et toute suite de nombres complexes~$(B_{n,r,s})_{(n, r, s)\in\bfN^3}$, on a
\begin{align*}
&\ \sum_{R<r\leq2R}\sum_{S<s\leq2S}\sum_{1\leq n\leq N}B_{n,r,s}\underset{(rd, sc)=1}{\sum_{C<c\leq2C}\sum_{D<d\leq2D}}
g_0\Big(\frac c C, \frac d D\Big)\e\Big(n\frac{\bar{rd}}{sc}\Big)\\
\ll&\ (CDNRS)^\ee\{CS(RS+N)(C+DR)+C^2DS\sqrt{(RS+N)R}+D^2NRS^{-1}\}^{1/2}\Big\{\sum_{n,r,s}|B_{n,r,s}|^2\Big\}^{1/2}.
\end{align*}
La constante implicite dépend au plus de~$g_0$ et~$\ee$.
\end{lemme}
Selberg a émis la conjecture que le Laplacien agissant sur~$\Gamma\backslash\bfH$ n'a aucune valeur propre dans~$]0, 1/4[$, lorsque~$\Gamma$ est un sous-groupe de congruence ({\it cf.}~\cite{Sarnak} pour plus d'explications à ce sujet).
Si cette conjecture est vraie, on peut ignorer le terme~$C^2DS\sqrt{(RS+N)R}$ dans la majoration du Lemme~\ref{kloo-mean}.
Cela n'a aucune incidence sur le résultat du~Théorème~\ref{thm-BFI-Sxy}.
Par ailleurs, la majoration du Lemme~\ref{kloo-mean} ne dépend pas directement du réel~$\kappa$ tel qu'il soit connu que le spectre du Laplacien
est inclus dans~$\{0\}\cup[\kappa, \infty[$. Deshouillers et Iwaniec utilisent à la place des résultats de densité
sur le spectre du Laplacien~\cite[theorem~6]{DI}.

L'argument que l'on présente ici fait usage d'un résultat récent de Harper~\cite{HarperBV2012}. Plus précisément, il découle des calculs de la section~3.3 de~\cite{HarperBV2012} la majoration suivante, qui concerne les moyennes de sommes de caractères de petits conducteurs sur les entiers friables.
\begin{lemme}\label{lemme-harper}
Il existe des constantes~$\eta, c, \delta>0$ telles que lorsque~$(\log x)^c\leq y\leq x$ et~$Q\leq x$, pour tout~$A>0$ fixé,
\[ \sum_{q\leq Q}\frac1{\vphi(q)}\sum_{\chi \mod{q}\\1<\cond(\chi)\leq x^\eta}\Big|\sum_{n\in S(x, y)}\chi(n)\Big| \ll \Psi(x, y)\big\{H(u)^{-\delta}(\log x)^{-A} + y^{-\delta}\big\} .\]
La constante implicite est effective si~$A<1$.
\end{lemme}

On fera également usage d'une inégalité de grand crible, sous la forme classique suivante ({\it cf.} le theorem~7.13 de~\cite{kw}).
\begin{lemme}\label{gd-crible}
Pour tous entiers~$Q, M, N\geq1$ et toute suite de nombres complexes~$(a_n)_{M<n\leq M+N}$, on a
\[ \sum_{q\leq Q}\frac q{\vphi(q)}\sum_{\chi\mod{q}\\\chi\text{ \emph{primitif}}}\Big|\sum_{M<n\leq M+N}a_n\chi(n)\Big|^2
\leq (N+Q^2-1)\sum_{M<n\leq M+N}|a_n|^2 \]
\end{lemme}

\section{Démonstration du Théorème~\ref{thm-BFI-Sxy}}

Dans toute la suite, pour tous~$r\in\bfN$, $k\mod{r}$ et~$\ee>0$, on note
\begin{equation}\label{def-omega}
\omega_\ee(k ; r):=\sum_{\chi\prim\\\cond(\chi)\leq x^\ee\\\cond(\chi)|r}\chi(k)
\end{equation}
où la somme est sur l'ensemble des caractères primitifs de Dirichlet de conducteur inférieur à~$x^\ee$ et divisant~$r$.
En particulier, cela inclut toujours le caractère trivial $\chi=\bfUn$, correspondant au module~$1$.
\begin{theoreme}\label{thm-BFI-MNL}
Pour tout~$\ee>0$ suffisament petit, lorsque~$a_1, a_2\in\bfZ\setminus\{0\}, (a_1, a_2)=1$,
et lorsque~$(\alpha_m)$, $(\beta_n)$, $(\lambda_\ell)$ sont trois suites de nombres complexes
de modules inférieurs ou égaux à~$1$, de supports respectifs dans les entiers de~$]M, 2M]$, $]N, 2N]$, $]L, 2L]$, avec~$x:=MNL$,
il existe~$\delta>0$ pouvant dépendre de~$\ee$ tel que lorsque l'un quelconque des deux ensembles de conditions sur~$M, N, L, R$ suivants est vérifié :
\begin{equation}\label{cond-thm-pcp-kloomean}
\left\{\begin{array}{l}
|a_1|, |a_2|\leq x^\delta, \qquad x^\ee\leq N, \qquad NL\leq x^{2/3-5\ee}, \qquad L\leq x^{-\ee}M, \\
M\leq R\leq x^{-\ee}NL, \qquad N^2L^3 \leq x^{1-\ee} R, \qquad N^5L^2 \leq x^{2-\ee}, \qquad N^4L^3\leq x^{2-\ee},
\end{array}\right.
\end{equation}
ou
\begin{equation}\label{cond-thm-pcp-klooweil}
\left\{\begin{array}{l}
|a_1|\leq x^{1-\ee}, |a_2|\leq x^\delta \qquad \qquad NL\leq x^{2/3-5\ee}, \\
M\leq R\leq x^{-\ee}NL, \qquad N^3L^4\leq x^{2-\ee}, \qquad N^6L^5\leq x^{4-\ee}R^{-2},
\end{array}\right.
\end{equation}
on ait la majoration
\begin{equation}\label{estim-MNL}
\sum_{R<r\leq2R \\ (r, a_1a_2)=1} \Bigg|
\mathop{\sum_{m}\sum_{n}\sum_{\ell}}_{mn\ell\equiv a_1\bar{a_2}\mod{r}} \alpha_m\beta_n\lambda_\ell -
\frac{1}{\vphi(r)}\mathop{\sum_{m}\sum_{n}\sum_{\ell}}_{(mn\ell, r)=1} \alpha_m\beta_n\lambda_\ell \omega_\ee(mn\ell\bar{a_1}a_2 ; r)
\Bigg| \ll_\ee x^{1-\delta} .\end{equation}
\end{theoreme}
En particulier, on a sous les mêmes hypothèses la majoration
\[ \sum_{R<r\leq2R \\r~\text{premier}\\r\nmid a_1a_2} \Bigg|
\mathop{\sum_{m}\sum_{n}\sum_{\ell}}_{mn\ell\equiv a_1\bar{a_2}\mod{r}} \alpha_m\beta_n\lambda_\ell -
\frac{1}{\vphi(r)}\mathop{\sum_{m}\sum_{n}\sum_{\ell}}_{r\nmid mn\ell} \alpha_m\beta_n\lambda_\ell
\Bigg| \ll_\ee x^{1-\delta} .\]
Le Théorème~\ref{thm-BFI-MNL} est à rapprocher du theorem~4 de~\cite{BFI} et du théorème~2 de~\cite{Fouvry82}, qui énoncent que la majoration
\begin{equation}\label{estim-MNL-before} \sum_{R<r\leq2R \\ (r, a_1)=1} \Bigg|
\mathop{\sum_{m}\sum_{n}\sum_{\ell}}_{mn\ell\equiv a_1\mod{r}} \alpha_m\beta_n\lambda_\ell -
\frac{1}{\vphi(r)}\mathop{\sum_{m}\sum_{n}\sum_{\ell}}_{(mn\ell, r)=1} \alpha_m\beta_n\lambda_\ell
\Bigg| \ll_{\ee,A} x/(\log x)^A \end{equation}
est valable sous les conditions du Théorème~\ref{thm-BFI-MNL} et sous une hypothèse supplémentaire de type~{\it Siegel-Walfisz}, que~$(\beta_n)$ est bien répartie dans les progressions arithmétiques de petits modules\footnote{Dans~\cite{BFI}, les auteurs font d'autres hypothèses simplificatrices, nommément (A_4) et~(A_5). Les auteurs mentionnent (p.220) que ces hypothèses ne sont pas cruciales, ce que les calculs du présent travail mettent en évidence.}. Le gain dans la majoration~\eqref{estim-MNL} par rapport à~\eqref{estim-MNL-before} provient du fait que l'on considère uniquement
la contribution des caractères de conducteurs~$>x^\ee$. Cela est la principale nouveauté de ce travail par rapport à~\cite{BFI} et~\cite{Fouvry82}.
Une telle réduction est admissible pour l'application aux entiers friables en vertu du Lemme~\ref{lemme-harper} de Harper précité.

Les conditions limitantes pour la taille de~$R$ dans le système~\eqref{cond-thm-pcp-kloomean} sont heuristiquement
\[ R\leq x^{-\ee}NL, \qquad N^2L^3 \leq x^{1-\ee} R, \qquad N^4L^3\leq x^{2-\ee} .\]
Elles impliquent en effet~$R\leq x^{3/5-8\ee/5}$, et l'égalité est atteinte pour les valeurs~$N = x^{1/5+4\ee/5}$, $L = x^{2/5-7\ee/5}$, $M = x^{2/5+3\ee/5}$.
De même, les conditions limitantes pour la taille de~$R$ dans le système~\eqref{cond-thm-pcp-klooweil} sont
\[ R\leq x^{-\ee}NL, \qquad N^3L^4\leq x^{2-\ee}, \qquad N^6L^5\leq x^{4-\ee}R^{-2}, \]
elles impliquent~$R\leq x^{6/11-\ee}$, et l'égalité est atteinte pour~$N=x^{2/11+\ee}$, $L=x^{4/11-\ee}$, $M=x^{4/11}$.
Cela est à l'origine des exposants~$3/5$ et~$6/11$ dans le Théorème~\ref{thm-BFI-Sxy}.

On remarque que, contrairement à~\cite[théorème~2]{Fouvry82} et~\cite[theorem~4]{BFI}, on ne fait pas d'hypothèse
sur la bonne répartition modulo~$r$ des suites étudiées. Une hypothèse comme l'hypothèse~(A_2) de~\cite{BFI} ne serait pas pertinente
dans notre étude puisque l'on considère uniquement la contribution des caractères de grands conducteurs.

La démonstration proposée suit dans une large mesure celles de~\cite[théorème~2]{Fouvry82} et~\cite[theorem~4]{BFI},
On s'inspire des calculs réalisés dans~\cite[section~3]{FI} afin de ne pas recourir à certaines hypothèses contraignantes,
comme l'hypothèse~(A_4) de~\cite{BFI}.

On note~$(u_k)$ la suite définie pour tout~$k\in\bfN$ par~$u_k:=\sum_{n\ell=k}\beta_n\lambda_\ell$.
Cette suite est à support dans les entiers de~$]K,4K]$ avec~$K:=NL$, et vérifie~$|u_k|\leq\tau(k)$ ainsi que~$\sum_{k}|u_k|\ll K$.
On note également
\[ \cX = \cX_\ee := \big\{\chi\prim\ |\  \cond(\chi)\leq x^\ee\} .\]
On a~$|\cX| \sim x^{2\ee}/(2\zeta(2)^2)$ lorsque~$x\to\infty$, {\it cf.}~\cite[formule~(3.7)]{kw}.

\subsection{Calcul de dispersion}

On désigne par~$\Delta(M,N,L,R)$ le membre de gauche de~\eqref{estim-MNL}. On a par l'inégalité de Cauchy--Schwarz
\begin{equation}\label{majo-delta-init}
\Delta(M,N,L,R)^2 \leq MR\sum_{R<r\leq 2R\\(r, a_1a_2)=1} \sum_{M<m\leq 2M\\(m, r)=1} \Bigg|\sum_{k\equiv a_1\bar{a_2m}\mod{r}} u_k
-\frac{1}{\vphi(r)}\sum_{(k, r)=1} u_k\omega_\ee(mk\bar{a_1}a_2 ; r) \Bigg|^2 .\end{equation}
Étant donnée une fonction~$\cC^\infty$ $\Phi_0:\bfR_+\to\bfR_+$, à support compact inclus dans~$[1/2, 3]$
et majorant la fonction indicatrice de l'intervalle~$[1, 2]$, on pose~$f(m) := \Phi_0(m/M)$.
Alors le membre de droite de l'expression précédente est majoré par
\begin{equation}\label{dispersion}\begin{aligned}
&\ MR \sum_{R<r\leq 2R\\(r, a_1a_2)=1} \sum_{(m, r)=1} f(m)
\Bigg|\sum_{k\equiv a_1\bar{a_2m}\mod{r}}u_k-\frac{1}{\vphi(r)}\sum_{(k, r)=1} u_k\omega_\ee(mk\bar{a_1}a_2 ; r) \Bigg|^2 \\
=&\ MR(\cS_1-2\Re \cS_2+\cS_3)
\end{aligned}\end{equation}
avec
\[ \cS_1 := \sum_{R<r\leq 2R\\(r,a_1a_2)=1}\sum_{(m,r)=1}f(m)\Bigg|\sum_{k\equiv a_1\bar{a_2m}\mod{r}}u_k\Bigg|^2, \]
\[ \cS_2 := \sum_{R<r\leq 2R\\(r,a_1a_2)=1}\frac{1}{\vphi(r)}\sum_{(m,r)=1}f(m)\summ_{(k_1, r)=1\\k_2\equiv a_1\bar{a_2m}\mod{r}}u_{k_1}
\omega_\ee(mk_1\bar{a_1}a_2;r)\bar{u_{k_2}},\]
\[ \cS_3 := \sum_{R<r\leq 2R\\(r,a_1a_2)=1}\frac{1}{\vphi(r)^2}\sum_{(m,r)=1}f(m)\Bigg|\sum_{(k,r)=1}u_{k}\omega_\ee(mk\bar{a_1}a_2;r)\Bigg|^2 .\]
On évalue successivement~$\cS_3, \cS_2$ puis~$\cS_1$.

\subsection{Estimation de~$\cS\sb3$}
On a
\[ \cS_3 = \sum_{R<r\leq2R\\(r,a_1a_2)=1}\frac1{\vphi(r)^2}\summ_{\chi_1, \chi_2\in\cX\\\cond(\chi_1)|r\\\cond(\chi_2)|r}\bar{\chi_1}\chi_2(a_1\bar{a_2})
\summ_{k_1, k_2\\(k_1k_2,r)=1}\chi_1(k_1)u_{k_1}\bar{\chi_2(k_2)u_{k_2}}\sum_{(m, r)=1}f(m)\chi_1\bar{\chi_2}(m) .\]
Le Lemme~\ref{lemme-poisson} permet d'évaluer la somme en~$m$. Notant~$H := x^{\ee}RM^{-1}$, pour tous~$\chi_1,\chi_2\in\cX$
avec~$\cond(\chi_1)|r$ et~$\cond(\chi_2)|r$, on a
\begin{align*}
\sum_{(m, r)=1}f(m)\chi_1\bar{\chi_2}(m)=&\ \hat f(0)\sum_{0<b<r\\(b,r)=1}\chi_1\bar{\chi_2}(b)
+\frac 1 r\sum_{0<|h|<H}\hat f\Big(\frac h r\Big)\sum_{0<b<r\\(b,r)=1}\chi_1\bar{\chi_2}(b)\e\Big(\frac{-bh}r\Big)+O(1).
\end{align*}
La somme sur~$b$ est une somme de Gauss, pour laquelle on dispose des lemmes~3.1 et~3.2 de~\cite{kw}.
Par ailleurs le module de~$\chi_1\bar{\chi_2}$ est un diviseur de~$r$ qui est inférieur à~$x^{2\ee}$. On obtient donc
\[ \Big|\sum_{0<b<r\\(b,r)=1}\chi_1\bar{\chi_2}(b)\e\Big(\frac{-bh}{r}\Big)\Big|\leq \cond(\chi_1\bar{\chi_2})^{1/2}\sum_{d|(h,r)}d .\]
Ainsi, en utilisant la majoration~$\|\hat f\ \|_\infty\ll M$, on a
\[ \frac 1 r\sum_{0<|h|<H}\hat f\Big(\frac h r\Big)\sum_{0<b<r\\(b,r)=1}\chi_1\bar{\chi_2}(b)\e\Big(\frac{-bh} r\Big)
\leq \frac{x^\ee}r\sum_{d|r}d\sum_{|h|\leq H/d}\Big|\hat f\Big(\frac{dh}{r}\Big)\Big| \ll x^{2\ee}\tau(r) .\]
En reportant dans~$\cS_3$, on obtient
\begin{equation}\label{estim-S3} \cS_3 = \hat f(0)X_3+R_3 \end{equation}
avec
\[ X_3 := \sum_{R<r\leq 2R\\(r,a_1a_2)=1}\frac 1 r\sum_{0<b<r\\(b,r)=1}\Bigg|\frac 1{\vphi(r)}\sum_{(k,r)=1}u_k\omega_\ee(k\bar b ; r)\Bigg|^2, \]
\[ R_3\ll x^{2\ee}|\cX|^2\sum_{R<r\leq 2R\\(r,a_1a_2)=1}\frac{\tau(r)}{\vphi(r)^2}\Bigg(\sum_{(k,r)=1}|u_k|\Bigg)^2\ll x^{6\ee}K^2(\log R)R^{-1} .\]
Chacun des deux systèmes de conditions~\eqref{cond-thm-pcp-kloomean} et~\eqref{cond-thm-pcp-klooweil}
implique~$K\leq x^{2/3}$, quitte à supposer~$\ee$ suffisamment petit on obtient~$R_3=O(x^{1-\ee}KR^{-1})$.

\subsection{Estimation de~$\cS\sb2$}
On a
\[ \cS_2 = \sum_{\chi\in\cX}\sum_{R<r\leq2R\\(r,a_1a_2)=1\\\cond(\chi)|r}\frac1{\vphi(r)}\sum_{(m,r)=1}f(m)\summ_{(k_1,r)=1\\k_2\equiv a_1\bar{a_2m}\mod{r}}
\chi(k_1m\bar{a_1}a_2)u_{k_1}\bar{u_{k_2}} .\]
Pour des indices~$\chi, r, m, k_1, k_2$ de cette somme, on a~$\chi(k_1m\bar{a_1}a_2) = \chi(k_1)\bar{\chi(k_2)}$, ainsi
\[ \cS_2 = \sum_{\chi\in\cX}\sum_{k_1}\chi(k_1)u_{k_1}\sum_{k_2}\bar{\chi(k_2)u_{k_2}}
\sum_{R<r\leq 2R\\(r,a_ik_i)=1\\\cond(\chi)|r}\frac 1{\vphi(r)}\sum_{m\equiv a_1\bar{a_2k_2}\mod{r}}f(m).\]
D'après le Lemme~\ref{lemme-poisson}, en posant~$H=x^{\ee}RM^{-1}$, la somme sur~$m$ vaut
\[ \sum_{m\equiv a_1\bar{a_2k_2}\mod{r}}f(m) = \frac 1 r\sum_{|h|\leq H}\hat f\Big(\frac h r\Big)\e(-a_1h\bar{a_2k_2}/r)+O_\ee(1/r).\]
En isolant la contribution du terme d'indice~$h=0$ et en reportant cela dans~$\cS_2$, on obtient :
\[ \cS_2 = \hat f(0)X_2 + R_2 +O(|\cX|K^2R^{-1})\]
avec
\[ X_2:=\sum_{R<r\leq 2R\\(r,a_i)=1}\frac 1{r\vphi(r)}\summ_{(k_1,r)=1\\(k_2,r)=1}u_{k_1}\bar{u_{k_2}}\omega_\ee(k_1\bar{k_2} ; r) ,\]
\[ R_2:=\sum_{\chi\in\cX}\sum_{k_1}\chi(k_1)u_{k_1}\sum_{k_2}\bar{\chi(k_2)u_{k_2}}
\sum_{R<r\leq 2R\\(r,a_ik_i)=1\\\cond(\chi)|r}\frac 1{r\vphi(r)}\sum_{1\leq|h|\leq H}\hat f\Big(\frac h r\Big)\e\Big(\frac{-a_1h\bar{a_2k_2}} r\Big) .\]
Les entiers~$a_2k_2$ et~$r$ étant premiers entre eux, on a l'égalité
\[ \frac{\bar{a_2k_2}}{r}\equiv-\frac{\bar{r}}{a_2k_2}+\frac 1{a_2k_2r} \quad \mod{1}.\]
On a donc
\begin{equation}\label{majo-R2} R_2\leq R^{-2}\sum_{\chi\in\cX}\summ_{k_1, k_2}|u_{k_1}u_{k_2}|\sum_{1\leq|h|\leq H}\Bigg|\sum_{R<r\leq2R\\(r, a_ik_i)=1\\\cond(\chi)|r}
\frac r{\vphi(r)}\hat f\Big(\frac h r\Big)\e\Big(\frac{a_1h\bar r}{a_2k_2}-\frac{a_1h}{a_2rk_2}\Big)\Bigg|
.\end{equation}
On effectue une intégration par parties afin d'éliminer le terme~$\hat f(h/r)\e(-a_1h/(a_2rk_2))$. Le membre de droite de~\eqref{majo-R2} s'écrit
\[ R^{-2}\sum_{z\in\cZ}\lambda(z)\Big|\sum_{R<r\leq2R}g_1(z, r)g_2(z, r)\Big| \]
où on a posé
\[ \cZ := \big\{(\chi, k_1, k_2, h)\ |\ \chi\in\cX, K<k_i\leq 4K, 1\leq|h|\leq H \big\} \]
et pour tout~$z=(\chi,k_1,k_2,h)\in\cZ$,
\[ \lambda(z):=|u_{k_1}u_{k_2}|, \qquad g_1(z, r) := \hat f\Big(\frac h r\Big)\e\Big(-\frac{a_1h}{a_2rk_2}\Big),
\qquad g_2(z, r) := \bfUn_{(r, a_ik_i)=1}\bfUn_{\cond(\chi)|r}\frac r{\vphi(r)}\e\Big(\frac{a_1h\bar r}{a_2k_2}\Big) .\]
Posant~$G_2(z, \xi) := \sum_{R<r'\leq\xi}g_2(z, r')$, une intégration par parties fournit
\begin{align*}
\sum_{z\in\cZ}\lambda(z)\Big|\sum_{R<r\leq2R}g_1(z, r)g_2(z, r)\Big| &= \sum_{z\in\cZ}\lambda(z)\Big|\int_{R+}^{2R+}g_1(z,\xi)\dd G_2(z,\xi)\Big| \\
&\leq \sup_{z\in\cZ\\R<\xi\leq2R}|g_1(z,\xi)|\times\sum_{z\in\cZ}\lambda(z)\big|G_2(z, 2R)\big| \\
&\qquad+ R\sup_{z\in\cZ\\R<\xi\leq2R}\Big|\frac{\partial g_1}{\partial \xi}(z,\xi)|\times\sup_{R<\xi\leq2R}\sum_{z\in\cZ}\lambda(z)\big|G_2(z, \xi)\big|
\end{align*}
Pour tout~$\xi\in]R,2R]$ et~$z=(\chi, k_1, k_2, h)\in\cZ$, la majoration~\eqref{somme-exp-weil-var} fournit
\[ G_2(z, \xi) \ll x^{\ee/2}\big\{(a_1h,a_2k_2)^{1/2}k_2^{1/2}+(a_1h,a_2k_2)Rk_2^{-1}\big\} ,\]
on obtient donc, en utilisant l'inégalité~$\sum_{t\leq T}(at, b)\leq (a, b)\tau(b)T$ valable pour tous~$b\in\bfN, T\geq 1$,
\[ \sum_{z\in\cZ}\lambda(z)\big|G_2(z, \xi)\big| \ll x^\ee |\cX|HK^{5/2} + x^\ee|\cX|HKR .\]
Par ailleurs, ~$|g_1(z, \xi)|\ll M$ et~$\partial g_1/\partial \xi(z, \xi) \ll HM^2R^{-2}$ dès lors que l'on a~$|a_1|\leq x$.
Ainsi, sous les hypothèses~\eqref{cond-thm-pcp-kloomean} ou~\eqref{cond-thm-pcp-klooweil},
on a~$R_2 \ll x^{6\ee}\big\{K^{5/2}R^{-1}+K\big\}\ll x^{1-\ee}KR^{-1}$, ce qui fournit
\begin{equation}\label{estim-S2} \cS_2 = \hat f(0)X_2+O(x^{1-\ee}KR^{-1}) .\end{equation}

\subsection{Estimation de~$\cS\sb1$}

En séparant les sommants de~$\cS_1$ selon la valeur de~$(k_1,k_2)$, on a
\[ \cS_1=\sum_{v\geq 1}\sum_{R<r\leq 2R\\(r,a_iv)=1}\sum_{(m,r)=1}f(m)\summ_{k_1\equiv a_1\bar{a_2m}\mod{r}\\k_2\equiv a_1\bar{a_2m}\mod{r}\\(k_1,k_2)=v}u_{k_1}\bar{u_{k_2}}
= \sum_{v\geq 1}S(v) \]
avec pour tout~$v\geq 1$,
\[ S(v):=\sum_{R<r\leq 2R\\(r,a_iv)=1}\sum_{(m,r)=1}f(m)\summ_{k_1\equiv a_1\bar{a_2m}\mod{r}\\k_2\equiv a_1\bar{a_2m}\mod{r}\\(k_1,k_2)=v}u_{k_1}\bar{u_{k_2}} .\]
Soit~$\eta>0$. La contribution des indices~$v\geq x^{\eta}$ est majorée par
\begin{align*}
\sum_{v\geq x^\eta}S(v)\leq&\ \sum_{R<r\leq 2R\\(r,a_1a_2)=1}\sum_{(m,r)=1}f(m)\sum_{v\geq x^\eta\\(v,r)=1}\Bigg(\sum_{k\equiv a_1\bar{a_2m}\mod{r}\\v|k}|u_k|\Bigg)^2.\end{align*}
La somme sur~$k$ est majorée par~$x^{\eta/4}\big\{K/(vR)+1\big\}\ll x^{-3\eta/4}K/R$ dès que~$3\eta/4<\ee$. On a donc
\begin{align*}
\sum_{v\geq x^\eta}S(v)\ll&\ \frac{K}{Rx^{3\eta/4}}\sum_{R<r\leq 2R\\(r,a_1a_2)=1}\sum_{(m,r)=1}f(m)\sum_{v\geq x^\eta\\(v,r)=1}\sum_{k\equiv a_1\bar{a_2m}\mod{r}\\v|k}|u_k|\\
\ll&\ \frac{K}{Rx^{3\eta/4}}\sum_{k}|u_k|\tau(k)\sum_{m\\a_2mk\neq a_1}f(m)\tau(|a_2mk-a_1|) + Kx^{-3\eta/4}\tau(|a_1|)^3\\
\ll&\ x^{1-\eta/2}KR^{-1}
\end{align*}
quitte à supposer~$\eta\leq\ee$. On fixe à présent un entier~$v\leq x^{\eta}$. On écrit de façon unique $k_1=vd_1e_1k_1''$ avec~$d_1|v^\infty$ (c'est-à-dire~$p|d_1\Rightarrow p|v$), $e_1|a_2^\infty$ et~$(k_1'', va_2)=1$, ainsi que~$k_2=ve_2k_2'$ avec~$e_2=(k_2/v, a_2)$. La contribution à~$S(v)$ des indices~$k_1, k_2$ pour lesquels~$d_1>x^{\eta}$ est
\begin{align*}
\leq&\ \sum_{d_1>x^{\eta}\\d_1|v^\infty}\sum_{R<r\leq2R\\(r, a_1a_2v)=1}\sum_{(m, r)=1}f(m)
\summ_{vd_1|k_1, v|k_2\\k_1\equiv k_2\equiv a_1\bar{a_2m}\mod{r}}|u_{k_1}u_{k_2}| \\
\ll&\ x^{\eta/4}\sum_{d_1>x^{\eta}\\d_1|v^\infty}\sum_{R<r\leq2R\\(r,a_1a_2v)=1}\sum_{(m, r)=1}f(m)\sum_{K/(vd_1)<\tilde{k_1}\leq2K/(vd_1)\\\tilde{k_1}\equiv a_1\bar{a_2vd_1m}\mod{r}}
\sum_{K/v<\tilde{k_2}\leq2K/v\\\tilde{k_2}\equiv d_1\tilde{k_1}\mod{r}}1\\
\ll&\ KR^{-1}x^{\eta/4}\sum_{d_1>x^{\eta}\\d_1|v^\infty}\sum_{m}f(m)\sum_{K/(vd_1)<\tilde{k_1}\leq2K/(vd_1)\\a_2vd_1\tilde{k_1}m\neq a_1}\tau(|a_2vd_1\tilde{k_1}m-a_1|)
+ K\tau(|a_1|)^3 \\
\ll&\ x^{1+\eta/2}KR^{-1}\sum_{d_1>x^{\eta}\\d_1|v^\infty}d_1^{-1} + Kx^{\eta}\\
\ll&\ x^{1-\eta/4}KR^{-1}
\end{align*}
la dernière majoration étant uniforme pour~$v\leq x$. On montre de même que la contribution à~$S(v)$ des indices vérifiant~$e_1>x^\eta$ ou~$e_2>x^\eta$
est~$O(x^{1-\eta/4}KR^{-1})$. On se retreint donc dorénavant à~$\max\{d_1, e_1, e_2\}\leq x^\eta$. On note pour tous~$d_1|v^\infty$, $e_1|a_2^\infty$ et~$e_2|a_2$ :
\[ S(v ; d_1, e_1, e_2):=\sum_{R<r\leq2R\\(r,a_1a_2v)=1}\sum_{(m, r)=1}f(m)
\summ_{k_1\equiv k_2\equiv a_1\bar{a_2m}\mod{r}\\(k_1,k_2)=v, ve_1d_1|k_1, ve_2|k_2\\(k_1/(vd_1e_1), va_2)=(k_2/(ve_2), a_2)=1}u_{k_1}\bar{u_{k_2}} .\]
En appliquant le Lemme~\ref{lemme-poisson} à la somme sur~$m$, on obtient
\[ S(v ; d_1, e_1, e_2)=\hat f(0)X_1(v; d_1, e_1, e_2)+R(v ; d_1, e_1, e_2)+T(v ; d_1, e_1, e_2) \]
avec les notations~$H:=x^\eta RM^{-1}$ et
\[ \cK = \cK(r, v ; d_1, e_1, e_2) := \left\{(k_1, k_2)\in\bfN\ \Bigg|\ \begin{array}{l}k_1\equiv k_2\mod{r}, (k_1, k_2)=v, ve_1d_1|k_1, ve_2|k_2, \\
(k_1, r) = (k_1/(vd_1e_1), va_2)=(k_2/(ve_2), a_2)=1 \end{array} \right\},\]
\begin{align} \notag X_1(v ; d_1, e_1, e_2):=&\ \sum_{R<r\leq 2R\\(r,a_1a_2v)=1}\frac 1 r\summ_{(k_1, k_2)\in\cK} u_{k_1}\bar{u_{k_2}}, \\
\label{def-Rv} R(v ; d_1, e_1, e_2):=&\ \sum_{R<r\leq 2R\\(r,a_1a_2v)=1}\frac 1 r\summ_{(k_1, k_2)\in\cK}u_{k_1}\bar{u_{k_2}}
\sum_{1\leq|h|\leq H}\hat f\Big(\frac h r\Big)\e\Big(\frac{-ha_1\bar{a_2k_1}}{r}\Big) \quad ,\\
\notag T(v ; d_1, e_1, e_2)\ll&\ \sum_{R<r\leq 2R\\(r,v)=1}\frac 1 r\summ_{(k_1, k_2)\in\cK}|u_{k_1}u_{k_2}| .\end{align}
On a de plus, quitte à supposer~$\eta<\ee$ (de sorte que~$K>x^\eta$),
\begin{align*} \sum_{v\geq 1}\sum_{d_1|v^\infty}\sum_{e_1|a_2^\infty}\sum_{e_2|a_2}T(v ; d_1, e_1, e_2)\ll&\ x^\eta K^2R^{-1}+K, \\
\sum_{v\leq x^\eta}\summm_{\max\{d_1, e_1, e_2\}>x^\eta\\d_1|v^\infty, e_1|a_2^\infty, e_2|a_2}X_1(v ; d_1, e_1, e_2)
\ll&\ K^2R^{-1}x^{-\eta/2} , \\
\sum_{v>x^\eta}\sum_{d_1|v^\infty}\sum_{e_1|a_2^\infty}\sum_{e_2|a_2}X_1(v ; d_1, e_1, e_2) \ll&\ K^2R^{-1}x^{-\eta/2} + Kx^{\eta/2} . \end{align*}
Au final, on obtient
\begin{equation}\label{eg-S1}
\cS_1 = \hat f(0)X_1 + \sum_{v\leq x^{\eta}}\summm_{d_1, e_1, e_2\leq x^\eta\\d_1|v^\infty, e_1|a_2^\infty, e_2|a_2}R(v ; d_1, e_1, e_2) + O(x^{1-\eta/4}KR^{-1})
\end{equation}
avec
\[ X_1 := \sum_{R<r\leq 2R\\(r,a_1a_2)=1}\frac 1 r\sum_{0<b<r\\(b,r)=1}\Bigg|\sum_{k\equiv b\mod{r}}u_{k}\Bigg|^2 .\]
Il reste à étudier le second terme du membre de droite de~\eqref{eg-S1}. La suite~$(u_{vk'})_{k'}$ est une combinaison linéaire de~$\tau(v)$ produits de convolutions de suites de la forme~$(\beta_{d n'})_{n'}$ et~$(\lambda_{d \ell'})_{\ell'}$ pour différents entiers~$d$ divisant~$v$. Afin de clarifier les notations, on  la proposition suivante.

\begin{prop}\label{prop-bfi-final}
Soit~$\ee>0$ fixé suffisamment petit. Lorsque~$a\in\bfZ\setminus\{0\}$ et~$v, d_1, d_2\in\bfN$,
pour tous réels~$M, K, N, L, H, R, \cE$ supérieurs à~$1$, toutes suites~$(u_{k})$, $(\beta_n)$, $(\lambda_\ell)$
de supports respectifs dans les entiers de~$]K, 4K]$, $]N, 2N]$, $]L, 2L]$, vérifiant pour tous~$k, n, \ell$,
\[ |u_k|\leq \tau(k), \quad \max\{|\beta_n|, |\lambda_\ell|\}\leq 1 ,\]
\[ (k,vd_1d_2)>1\Rightarrow u_k=0, \qquad (n\ell,vd_1)>1\Rightarrow \beta_n\lambda_\ell=0, \]
et toute fonction lisse~$\Phi_0:\bfR_+\to\bfR_+$ à support inclus dans~$[1/2,3]$, posant~$f(m) := \Phi_0(m/M)$,
la majoration suivante :
\begin{equation}\label{prop-bfi-final-res}
\sum_{R<r\leq 2R\\(r,avd_1d_2)=1}\frac 1 r\summm_{d_1k\equiv d_2n\ell\mod{r}\\(d_1k,d_2n\ell)=1}u_{k}\beta_n\lambda_\ell
\sum_{1\leq|h|\leq H}\hat f\Big(\frac h r\Big)\e\Big(\frac{-ha\bar{vd_1d_2k}}{r}\Big) \ll (MKNL)^{10\ee} MKNLR^{-1}\cE^{-1}
\end{equation}
est valable sous l'un quelconque des deux ensembles de conditions suivants :
\begin{equation}\label{cond-prop-kloomean}\begin{aligned}
\left\{
\begin{array}{l}
|avd_1d_2| \leq (KNL)^\ee, \quad NL\leq d_2K, \quad H\leq R^{1+\ee}M^{-1}, \\
K\leq LN^2, \quad R^{\ee}K\leq NM, \quad M\leq R\leq K, \quad NL\leq KM, \quad R\leq M^2NL\cE^{-1}, \\
K\leq M^{1/2}N^{1/2}R^{1/2}\cE^{-1}, \quad K^{1/4}N^{1/2}\leq M^{1/2}L^{1/4}\cE^{-1}, \quad K^{1/2}\leq M^{1/2}L^{1/4}\cE^{-1},
\end{array}
\right.
\end{aligned}\end{equation}
ou
\begin{equation}\label{cond-prop-klooweil}\begin{aligned}
\left\{
\begin{array}{l}
|vd_1d_2| \leq (KNL)^\ee, \quad |a|R^\ee\leq vd_1d_2^2M\big\{K+NL\big\}, \quad H\leq R^{1+\ee}M^{-1}, \\
M\leq R\leq K, \quad K\leq MN^{1/2}\cE^{-1}, \quad N^{1/2}L^{1/4}\leq MR^{-1/2}\cE^{-1}
\end{array}
\right.
\end{aligned}\end{equation}
\end{prop}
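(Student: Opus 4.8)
\emph{Setup.} Write $\Sigma$ for the left-hand side of \eqref{prop-bfi-final-res}. The sum contains no diagonal: the condition $(d_1k,d_2n\ell)=1$ forces $d_1k\neq d_2n\ell$, so $d_1k-d_2n\ell$ is a non-zero integer, necessarily of absolute value $>R$ since it is divisible by $r$. The plan is to follow \cite{Fouvry82,BFI} (together with the device of \cite{FI} for dispensing with auxiliary hypotheses): use the reciprocity law to move the modulus $r$ of the exponential onto the multiplicative variables, peel off a smooth factor by summation by parts in $r$, and then appeal to the Deshouillers--Iwaniec estimate of Lemma~\ref{kloo-mean} under \eqref{cond-prop-kloomean}, respectively the Weil bound of Lemma~\ref{majo-kloo-complete} under \eqref{cond-prop-klooweil}.

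\emph{Reciprocity and extraction of a smooth factor.} Since $(vd_1d_2k,r)=1$,
\[ \e\Big(\frac{-ha\,\overline{vd_1d_2k}}{r}\Big)=\e\Big(\frac{ha\,\overline r}{vd_1d_2k}\Big)\e\Big(\frac{-ha}{vd_1d_2kr}\Big). \]
On $]R,2R]$ the function $r\mapsto r^{-1}\hat f(h/r)\e(-ha/(vd_1d_2kr))$ is smooth on the scale $R$ up to a factor $(MKNL)^{O(\ee)}$ --- this is where $H\leq R^{1+\ee}M^{-1}$, $M\leq R$ and the bounds on $|a|$ enter --- so that summation by parts in $r$ removes it at the cost of such a factor and of a size $O(M/R)$. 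On the support $(k,vd_1d_2)=1$, so the Chinese remainder theorem splits $\e(ha\overline r/(vd_1d_2k))$ as a phase of modulus $vd_1d_2\ll(KNL)^\ee$ --- which, once the class of $r$ modulo $vd_1d_2$ is fixed, is constant --- times a phase of modulus $k$; and in the latter the relation $r\mid d_1k-d_2n\ell$ combined with $(k,d_2n\ell)=1$ gives $\overline r\equiv -t\,\overline{d_2n\ell}\mod{k}$, where $d_1k-d_2n\ell=rt$ with $1\leq|t|\ll(d_1K+d_2NL)/R$. Thus $\Sigma$ is $\ll(MKNL)^{O(\ee)}(M/R)$ times a supremum over truncations in $r$ of sums
\[ \sum_{t}\sum_{h}\sum_{k}\sum_{n,\ell}u_k\,\beta_n\lambda_\ell\,W(h,k,n,\ell,t)\,\e\Big(\frac{\pm\,hat\,\overline{vd_1d_2^2n\ell}}{k}\Big), \]
$W$ being a smooth weight of size $O((MKNL)^{O(\ee)}M/R)$ and the surviving linear and coprimality conditions being in force.

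\emph{The exponential sums.} Under \eqref{cond-prop-kloomean}, in which $|avd_1d_2|\leq(KNL)^\ee$, the last display is a sum of Kloosterman sums, and one would bring it into the exact shape of Lemma~\ref{kloo-mean}: using the coprimality $(d_1k,d_2n\ell)=1$ and the size conditions $NL\leq d_2K$, $K\leq LN^2$, $R^\ee K\leq NM$, $NL\leq KM$ to produce the factored modulus required there, placing $u_k$, $\beta_n$ and $W$ into the coefficient sequence, and choosing the parameters $C,D,N,R,S$ of that lemma as suitable monomials in $M,K,N,L,H$ (admissibility being ensured by $M\leq R\leq K$). Inserting $\sum|B|^2\ll(MKNL)^{O(\ee)}M^2KNLR^{-1}$ and $H\ll R^{1+\ee}M^{-1}$, the three terms of the bound of Lemma~\ref{kloo-mean} become $\ll(MKNL)^{10\ee}MKNLR^{-1}\cE^{-1}$ precisely under, respectively, the saving conditions $K\leq M^{1/2}N^{1/2}R^{1/2}\cE^{-1}$, $K^{1/4}N^{1/2}\leq M^{1/2}L^{1/4}\cE^{-1}$ and $K^{1/2}\leq M^{1/2}L^{1/4}\cE^{-1}$, the condition $R\leq M^2NL\cE^{-1}$ controlling the term in which the two factors of the modulus coincide. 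Under \eqref{cond-prop-klooweil} one would instead keep $r$ as the summation variable and apply the Weil bound \eqref{somme-exp-weil-var} to the inner sum over $r$ of modulus $vd_1d_2k$ (legitimate since $v,d_1,d_2\ll(KNL)^\ee$); this accommodates $|a|$ as large as $vd_1d_2^2M(K+NL)$, and the two resulting terms are $\ll(MKNL)^{10\ee}MKNLR^{-1}\cE^{-1}$ under $K\leq MN^{1/2}\cE^{-1}$ and $N^{1/2}L^{1/4}\leq MR^{-1/2}\cE^{-1}$.

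\emph{Main difficulty.} The delicate point will be fitting the exponential sum to Lemma~\ref{kloo-mean} precisely: this dictates a definite, and not obvious, assignment of the variables $k,n,\ell,r,h,t$ to the roles of modulus, inverted argument, frequency, and outer coefficient, and it requires carrying all three coprimality hypotheses --- $(k,vd_1d_2)=1$, $(n\ell,vd_1)=1$, $(d_1k,d_2n\ell)=1$ --- through every reciprocity step and every use of the Chinese remainder theorem. The long lists of side conditions in \eqref{cond-prop-kloomean} and \eqref{cond-prop-klooweil} are exactly the constraints under which that arrangement is admissible and under which the various error terms generated along the way --- the range of $t$, the variation of the smooth weight, the residue classes modulo $vd_1d_2$, and the ``diagonal'' term of Lemma~\ref{kloo-mean} --- are absorbed into $(MKNL)^{10\ee}MKNLR^{-1}\cE^{-1}$.
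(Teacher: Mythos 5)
Your overall plan (reciprocity, passage from $r$ to $t=(d_2n\ell-d_1k)/r$, then Deshouillers--Iwaniec under \eqref{cond-prop-kloomean} and the Weil bound under \eqref{cond-prop-klooweil}) is the right one and is the paper's. But the step you flag as the ``main difficulty'' --- fitting the exponential sum to Lemma~\ref{kloo-mean} --- is precisely where the argument as you describe it breaks down, and the missing idea is an application of Cauchy--Schwarz \emph{before} any Kloosterman-sum estimate. Lemma~\ref{kloo-mean} requires the two variables $c,d$ forming the modulus and the inverted argument to carry a smooth compactly supported weight $g_0(c/C,d/D)$; in your sum $\sum_{t,h,k,n,\ell}u_k\beta_n\lambda_\ell W\,\e\big(hat\overline{\nu d_2^2n\ell}/k\big)$ the modulus variable $k$ carries the rough coefficient $u_k$ (only bounded by $\tau(k)$) and $\ell$ carries the arbitrary $\lambda_\ell$, so no assignment of $C,D,N,R,S$ makes the lemma applicable, and ``placing $u_k$ into the coefficient sequence'' is not an option: $B_{n,r,s}$ indexes the frequency and part of the inverted argument, not the modulus. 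The paper first separates $k$ from $n\ell$ by Fourier inversion, detects the congruence modulo $\nu\sigma t$ by Dirichlet characters, and applies Cauchy--Schwarz keeping $k,\ell,t,\chi$ outside and $(n,h)$ inside; after opening the square, $k$ and $\ell$ acquire the smooth weights $\Phi_0(k/K)\Phi_0(\ell/L)$ and Lemma~\ref{kloo-mean} is applied to the bilinear form in the new frequency $e=at(n'h-nh')$, with $C\gets K$, $D\gets L$, $N\gets 4|a|HNT$, $R\gets 4\nu d_2N^2$, $S\gets 1$. The diagonal $e=0$ and the moment $\sum_{e,q}|B_{e,q}|^2$ must then be estimated separately; your claimed bound $\sum|B|^2\ll M^2KNLR^{-1}$ does not correspond to anything in this structure.

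The second case contains a more serious structural error: you propose to apply the Weil bound \eqref{somme-exp-weil-var} to ``the inner sum over $r$ of modulus $vd_1d_2k$''. But $r$ is not a free variable here: the congruence $d_1k\equiv d_2n\ell\mod{r}$ forces $r\mid d_1k-d_2n\ell$, so once $k,n,\ell$ are fixed there are only $x^{o(1)}$ admissible $r$ in $]R,2R]$ and there is no cancellation to extract from that sum. (You may be conflating this with the estimation of $\cS_2$, where \eqref{somme-exp-weil-var} is indeed applied to a genuinely free $r$-sum.) In the proposition the paper again changes variables to $t$, performs a second reciprocity step to move the modulus from $k$ to $\nu d_2n\ell$, applies Cauchy--Schwarz with only $n$ inside the square, and then applies \eqref{somme-exp-weil} to the long smooth sum over $k$ with modulus $\nu d_2\ell[n,n']$; the conditions $K\leq MN^{1/2}\cE^{-1}$ and $N^{1/2}L^{1/4}\leq MR^{-1/2}\cE^{-1}$ come out of the elementary gcd estimates for the resulting quantities $\cD_1,\cD_2$. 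So both halves of your argument need the Cauchy--Schwarz (dispersion) step inserted, and the second half needs the Weil bound redirected from the $r$-variable to the $k$-variable.
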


On insiste sur le fait que dans les hypothèses de cette proposition, $K$ n'est pas nécessairement égal à~$NL$. Dans la somme du membre de gauche de~\eqref{prop-bfi-final-res}, les deux conditions~$d_1k\equiv d_2n\ell\mod{r}$ et~$(k, d_2n\ell)=1$ impliquent~$(k, r)=1$.

Admettons temporairement la Proposition~\ref{prop-bfi-final}. Pour tout indice~$k_2$ dans la définition~\eqref{def-Rv}, on note~$d_2 = (k_2/(ve_2), v)$ et~$k_2 = vd_2e_2k_2''$. On a
\[ u_{k_2}=\sum_{\delta_1\delta_2=ve_2}\sum_{n'\ell'=d_2k_2''\\(\ell', \delta_1)=1}\beta_{\delta_1n'}\gamma_{\delta_2\ell'}
=\sum_{\delta_1\delta_2=ve_2}\sum_{\delta_3\delta_4=d_2\\(\delta_4,\delta_1)=1}\sum_{n''\ell''=k_2''\\(\ell'',\delta_1\delta_3)=1}\beta_{\delta_1\delta_3n''}\gamma_{\delta_2\delta_4\ell''} \]
On obtient donc
\begin{equation}\label{expr-R-Rtilde} R(v ; d_1, e_1, e_2)
= \sum_{d_2|v\\(d_2, a_2)=1}\sum_{\delta_1\delta_2=ve_2}\sum_{\delta_3\delta_4=d_2\\(\delta_4,\delta_1)=1}
\tilde R(v; d_1, d_2, \delta_1, \delta_3 ; e_1, e_2) \end{equation}
avec
\[ \tilde R(v; d_1, d_2, \delta_1, \delta_3, e_1, e_2) := \sum_{R<r\leq 2R\\(r, a_1a_2v)=1}\summm_{d_1e_1k''\equiv d_2e_2n''\ell''\mod{r}\\(d_1e_1k'',d_2e_2n''\ell'')=1}
\tilde u_{k''}\tilde\beta_{n''}\tilde\lambda_{\ell''}\sum_{1\leq|h|\leq H}\hat f\Big(\frac h r\Big)\e\Big(\frac{-a_1h\bar{a_2vd_1e_1k''}}{r}\Big), \]
\[ \tilde u_{k''} := \bfUn_{(k'', va_2)=1}u_{vd_1e_1k''}, \quad \tilde\beta_{n''}:=\bfUn_{(n'',a_2v/d_2)=1}\bar{\beta_{\delta_1\delta_3n''}},
\quad \tilde\lambda_{\ell''}:=\bfUn_{(\ell'', \delta_1\delta_3a_2v/d_2)=1}\bar{\lambda_{\delta_2\delta_4\ell''}} .\]
Pour chaque choix d'indices dans la somme~\eqref{expr-R-Rtilde}, on applique la Proposition~\ref{prop-bfi-final} avec les paramètres
\[ v \gets \frac{a_2v}{e_2d_2}, \quad d_1 \gets e_1d_1, \quad d_2\gets e_2d_2,
\quad K\gets K/(vd_1e_1), \quad N\gets N/(\delta_1\delta_3), \quad L\gets L/(\delta_2\delta_4). \]
On obtient pour un certain~$\eta'>0$ la majoration
\[ \tilde R(v; d_1, d_2, \delta_1, \delta_3 ; e_1, e_2) \ll x^{1-\eta'}KR^{-1} \]
lorsque l'un des deux ensembles de conditions suivants est satisfait :
\[
\left\{
\begin{array}{l}
|a_1a_2vd_1e_1|\leq x^{\eta'}, x^\eta \leq R^{\eta'}, \quad x^{5\eta'}L\leq M, \\
M\leq R\leq Kx^{-5\eta'}, \quad x^{5\eta'}\leq \min\{M, N\}, \quad x^{20\eta'}R\leq M^2K, \\
x^{20\eta'}N^{1/2}L\leq M^{1/2}R^{1/2}, \quad x^{20\eta'}N^{3/4}\leq M^{1/2}, \quad x^{20\eta'}N^{1/2}L^{1/4}\leq M^{1/2},
\end{array}
\right.
\]
ou
\[
\left\{
\begin{array}{l}
|a_2vd_1e_1|\leq x^{\eta'}, \quad |a_1|\leq x^{1-5\eta'}, \quad  x^\eta \leq R^{\eta'}, \\
M\leq R\leq Kx^{-5\eta'}, \quad x^{30\eta'}N^{1/2}L\leq M, \quad x^{30\eta'}N^{1/2}L^{1/4}\leq MR^{-1}.
\end{array}
\right.
\]
Cela implique~$\cS_1 - \hat f(0)X_1 \ll  (x^{1+4\eta-\eta'}+x^{1-\eta/4})KR^{-1}$.
Lorsque~$\eta'$ est pris suffisamment petit en fonction de~$\ee$, et~$\eta$ en fonction de~$\eta'$,
ces conditions sont satisfaites grâce aux hypothèses~\eqref{cond-thm-pcp-kloomean} ou~\eqref{cond-thm-pcp-klooweil}, respectivement,
et on obtient pour un certain~$\delta>0$
\begin{equation}\label{estim-S1}
\cS_1 = \hat f(0)X_1 + O(x^{1-\delta}KR^{-1})
.\end{equation}

\subsection{Démonstration de la Proposition~\ref{prop-bfi-final}}

Pour compléter l'estimation de~$\cS_1$, il reste à démontrer la Proposition~\ref{prop-bfi-final}. On remarque tout d'abord que la majoration~\eqref{prop-bfi-final-res} n'est non-triviale en pratique que lorsque le majorant est négligeable par rapport à~$MKNLR^{-1}$. On note que cette dernière expression est plus petite d'un facteur~$H$ que la majoration triviale consistant à appliquer l'inégalité triangulaire au membre de gauche de~\eqref{prop-bfi-final-res}. Cela est du à l'utilisation de la formule de Poisson (Lemme~\ref{lemme-poisson}) qui exprime un terme majoré trivialement par~$O(1)$ comme une somme d'exponentielles de taille~$O(H)$.

\bigskip

On note~$\nu:=vd_1d_2$, $k_1 := k$ et
\begin{equation}\label{def-gamma} \gamma_{k_2} := \sum_{n\ell=k_2}\beta_n\lambda_\ell .\end{equation}
L'objet d'étude est
\begin{equation}\label{def-cR}
\cR := \sum_{R<r\leq 2R\\(r,a\nu)=1}\frac 1 r\summ_{d_1k_1\equiv d_2k_2\mod{r}\\(d_1k_1,d_2k_2)=1}u_{k_1}\gamma_{k_2}
\sum_{1\leq|h|\leq H}\hat f\Big(\frac h r\Big)\e\Big(-ah\frac{\bar{\nu k_1}}{r}\Big) .\end{equation}
La suite~$\gamma$ est à support dans~$]NL,4NL]$. On rappelle que~$K$ et~$NL$ ne sont pas nécessairement égaux.

Les entiers~$r, \nu, k_1$ étant deux à deux premiers entre eux, on a la congruence
\[ \nu k_1\bar{\nu k_1}^{(r)}+\nu(d_2k_2-d_1k_1)\bar{\nu d_2k_2}^{(k_1)}+rk_1\bar{rk_1}^{(\nu)}\equiv 1\mod{r\nu k_1} \]
(où~$\bar{a}^{(q)}$ désigne un inverse de~$a$ modulo~$q$). Ainsi, avec~$t:=(d_2k_2-d_1k_1)/r$, on a
\begin{equation}\label{congru-triple}
-ah\frac{\bar{\nu k_1}}r \equiv aht\frac{\bar{\nu d_2k_2}}{k_1}+ah\frac{\bar{rk_1}}{\nu}-\frac{ah}{\nu k_1r}\quad\mod{1}
.\end{equation}
Le dernier terme est~$\ll R^{\ee}|a|\{\nu KM\}^{-1}$. On a donc
\begin{equation}\label{R-Rprime}\begin{aligned}
\cR=&\ \sum_{R<r\leq 2R\\(r,a\nu)=1}\frac 1 r\summ_{d_1k_1\equiv d_2k_2\mod{r}\\(d_1k_1,d_2k_2)=1}u_{k_1}\gamma_{k_2}
\sum_{1\leq|h|\leq H}\hat f\Big(\frac h r\Big)\e\Big(aht\frac{\bar{\nu d_2k_2}}{k_1}+ah\frac{\bar{rk_1}}\nu\Big)
\\ &\ + O(|a|(RKNL)^\ee K\{\nu M\}^{-1}) .\end{aligned}\end{equation}
On note~$\cR'$ le premier terme du membre de droite et pour tout entier~$w$ avec~$0<w<\nu$ et~$(w,\nu)=1$, on note
\[ \phi(w) := t\frac{\bar{\nu d_2k_2}}{k_1}+\frac{\bar{w k_1}}\nu \]
\[ \cR(w) := \sum_{R<r\leq 2R\\(r,a)=1\\r\equiv w\mod \nu}\frac 1 r\summ_{d_1k_1\equiv d_2k_2\mod{r}\\(d_1k_1,d_2k_2)=1}u_{k_1}\gamma_{k_2}
\sum_{1\leq|h|\leq H}\hat f\Big(\frac h r\Big)\e(ah\phi(w)) .\]
Les conditions sur~$r$ sont :
\[ R<r\leq 2R,\qquad(r,a)=1,\qquad r\equiv w\mod{\nu},\qquad d_1k_1\equiv d_2k_2\mod{r}, \]
elles deviennent vis-à-vis de~$t=(d_2k_2-d_1k_1)/r$ :
\[ R<\frac{d_2k_2-d_1k_1}t\leq 2R,\qquad d_2k_2-d_1k_1\equiv w t\mod{\nu t},\qquad (d_2k_2-d_1k_1,at)=t .\]
On détecte la troisième grâce à la relation
\[ \bfUn_{(d_2k_2-d_1k_1,at)=t}=\sum_{\sigma|a\\\sigma t|d_2k_2-d_1k_1}\mu(\sigma). \]
Pour un indice~$\sigma$ de cette somme, on a~$(\sigma,\nu)=1$. Ainsi, pour un certain~$\sigma|a$, on a
\begin{align*}
&\ |\cR'| = \Big|\sum_{0<w<\nu\\(w,\nu)=1}\cR(w)\Big|\\
\leq&\ \tau(|a|)\Bigg|\sum_{0<w<\nu\\(w,\nu)=1}\sum_{t\neq 0}
\summ_{(d_1k_1,d_2k_2)=1\\R<(d_2k_2-d_1k_1)/t\leq2R\\d_2k_2-d_1k_1\equiv w t\mod{\nu t}\\\sigma t|d_2k_2-d_1k_1}
u_{k_1}\gamma_{k_2}\frac t{d_2k_2-d_1k_1}\sum_{1\leq|h|\leq H}\hat f\Big(\frac{ht}{d_2k_2-d_1k_1}\Big)
\e(ah\phi(w))\Bigg|\\
\ll&\ |a|^\ee\Bigg|\sum_{0<w<\nu\\(w,\nu)=1}
\sum_{t\neq0}\summ_{(d_1k_1,d_2k_2)=1\\R<(d_2k_2-d_1k_1)/t\leq2R\\d_2k_2-d_1k_1\equiv w\sigma t\mod{\nu\sigma t}}
u_{k_1}\gamma_{k_2}\frac t{d_2k_2-d_1k_1}\sum_{1\leq|h|\leq H}\hat f\Big(\frac{ht}{d_2k_2-d_1k_1}\Big)
\e(ah\phi(w\sigma))\Bigg|.
\end{align*}
On a~$\sigma t|d_2k_2-d_1k_1$ et~$(d_1k_1,d_2k_2)=1$, donc~$(k_2,\sigma t)=1$. On peut alors utiliser les relations :
\begin{align*}
\bfUn_{d_2k_2\equiv d_1k_1+w\sigma t\mod{\nu\sigma t}} =&\ \frac{\bfUn_{d_2|d_1k_1+w\sigma t}}{\vphi(vd_1\sigma t)} \sum_{\chi\mod{vd_1\sigma t}} \chi(k_2) \bar{\chi((d_1k_1+w\sigma t)/d_2)} \\
\bfUn_{R<(d_2k_2-d_1k_1)/t\leq2R} =&\ \int_{-1/2}^{1/2} \e((d_2k_2-d_1k_1)\vth) F_t(\vth) \dd\vth \end{align*}
où~$F_t(\vth) := \sum_{c\in\bfZ, R<c/t\leq2R} \e(c\vth)\ll\min\{TR,|\vth|^{-1}\}$ pour~$|\vth|\leq 1/2$, avec~$T:=\max\{d_1K, d_2NL\}/R$. On a donc
\begin{align*}
|\cR'|\ll&\ |a|^\ee\Bigg|\sum_{0<w<\nu\\(w,\nu)=1}\sum_{1\leq|t|\leq T}\summ_{(d_1k_1,d_2k_2)=1\\d_2|d_1k_1+w\sigma t}
 \frac 1{\vphi(vd_1\sigma t)} \\
&\quad\times\sum_{\chi\mod{vd_1\sigma t}}\chi(k_2)\bar{\chi((d_1k_1+w\sigma t)/d_2)}u_{k_1}\gamma_{k_2}
\int_{-1/2}^{1/2}\e((d_2k_2-d_1k_1)\vth)F_t(\vth)\dd\vth \\
&\quad\times\sum_{1\leq|h|\leq H}\frac t{d_2k_2-d_1k_1}
\hat f\Big(\frac{ht}{d_2k_2-d_1k_1}\Big)\e(ah\phi(w\sigma))\Bigg|.
\end{align*}
Par ailleurs, en utilisant la définition de~$\hat f$ puis la formule d'inversion de Fourier afin de séparer les variables~$k_1$ et~$k_2$, on a
\[ \frac t{d_2k_2-d_1k_1} \hat f\left(\frac{ht}{d_2k_2-d_1k_1}\right) =
\int_0^{3MR^{-1}}\int_{-\infty}^\infty t\hat f(\eta t) \e(-\eta\xi(d_2k_2-d_1k_1))\e(\xi h)\dd\eta\dd\xi .\]
En injectant la définition~\eqref{def-gamma} et en renommant~$k_1$ en~$k$, on obtient
\begin{align*}
|\cR'|\ll&\ |a|^\ee\sum_{0<w<\nu\\(w,\nu)=1}\sum_{k}|u_k|\sum_{(\ell,k)=1}|\lambda_{\ell}|
\sum_{1\leq|t|\leq T}\int_{-1/2}^{1/2}\int_0^{3MR^{-1}}\int_{-\infty}^\infty|F_t(\vth)t\hat f(\eta t)|\frac 1{\vphi(vd_1\sigma t)} \\
&\times\sum_{\chi\mod{vd_1\sigma t}}\Bigg|\sum_{(n,k)=1}\sum_{1\leq|h|\leq H}\beta_{n}\chi(n)
\e(ah\phi(w\sigma)+d_2n\ell\vth+\xi h-\eta\xi d_2n\ell)
\Bigg|\dd\eta\dd\xi\dd\vth.
\end{align*}
On effectue les changements de variables~$\eta\gets\eta/\ell,\vth\gets\vth/\ell$ :
\begin{align*}
|\cR'|\ll&\ |a|^\ee\int_{-L}^{L}\int_0^{3MR^{-1}}\int_{-\infty}^{\infty}\sup_{1\leq|t|\leq T\\\max\{2|\vth|,L\}\leq \ell\leq 2L}
\Big|\frac1\ell F_t\Big(\frac\vth \ell\Big)\Big|\sup_{1\leq|t|\leq T\\L<\ell\leq 2L}\Big|\frac t \ell\hat f\Big(\frac{\eta t}\ell\Big)\Big| \\
&\times\sum_{0<w<\nu\\(w,\nu)=1}\sum_{k}\sum_{(\ell,k)=1}\sum_{1\leq|t|\leq T}|u_{k}\lambda_{\ell}|\frac1{\vphi(vd_1\sigma t)} \\
&\times\sum_{\chi\mod{vd_1\sigma t}}\Bigg|\sum_{(n,k)=1}\sum_{1\leq|h|\leq H}\beta_{n}\chi(n)
\e\Big(aht\frac{\bar{\nu d_2n\ell}}{k}+ah\frac{\bar{w\sigma k}}{\nu}+d_2n\vth+\xi h-\eta\xi d_2n\Big)
\Bigg|\dd\eta\dd\xi\dd\vth.
\end{align*}
On rappelle que par les hypothèses faites sur~$u, \beta, \lambda$, les sommations sont restreintes aux indices~$k, n, \ell$
tels que~$(k,\nu)=(n\ell,vd_1)=1$.
On permute les sommations sur~$w$ et~$k$ et on effectue le changement de variables~$w\gets w\bar{k\sigma}^{(\nu)}$.
Par ailleurs les deux~$\sup$ sont respectivement~$O(\min\{|\vth|^{-1}, TRL^{-1}\})$
et~$O(\min\{MTL^{-1},|\eta|^{-1},LM^{-1}|\eta|^{-2}\})$. Finalement, pour des entiers~$\sigma, w$ et des réels~$\eta, \xi, \vth$
dépendant au plus de~$v, d_1, d_2$ et~$a$ et vérifiant~$\sigma|a$, $(w, \nu)=1$, et ayant posé
\[ \beta(n,h):=\beta_{n}\e\Big(ah\frac{\bar{w}}{\nu}+d_2n\vth+\xi h-\eta\xi d_2n\Big), \]
on obtient
\begin{equation}\label{eq-lien-cB}
\begin{aligned}
|\cR'|\ll&\ (|a|TR)^\ee\nu MR^{-1}\sum_{k}\sum_{(\ell,k)=1}\sum_{1\leq|t|\leq T}|u_{k}\lambda_{\ell}| \\
&\quad\times\frac1{\vphi(vd_1\sigma t)}\sum_{\chi\mod{vd_1\sigma t}}\Bigg|\sum_{(n,k)=1}\sum_{1\leq|h|\leq H}\beta(n,h)\chi(n)
\e\Big(aht\frac{\bar{\nu d_2n\ell}}{k}\Big)\Bigg| \\
\ll&\ (|a|KTR)^\ee\nu MR^{-1}\{KLT\}^{1/2}\cB^{1/2}
\end{aligned}
\end{equation}
par l'inégalité de Cauchy-Schwarz, avec
\begin{align*}
\cB := \sum_{1\leq|t|\leq T}\frac1{\vphi(vd_1\sigma t)}&\sum_{\chi\mod{vd_1\sigma t}}\sum_{(k,\nu)=1}\Phi_0\Big(\frac k K\Big) \times \\
& \times \sum_{(\ell,k)=1}\Phi_0\Big(\frac \ell L\Big) \Bigg|\sum_{(n,k)=1}\sum_{1\leq|h|\leq H}\beta(n,h)\chi(n)\e\Big(aht\frac{\bar{\nu d_2n\ell}}{k}\Big)\Bigg|^2 .\end{align*}
Ici~$\Phi_0:\bfR_+\to\bfR_+$ dénote une fonction lisse majorée par la fonction indicatrice de l'intervalle~$[1/2, 3]$
et majorant la fonction indicatrice de l'intervalle~$[1,2]$.
En développant le carré et en évaluant la somme sur~$\chi$, on obtient
\[ \cB=\sum_{1\leq|t|\leq T}\sum_{(k,\nu)=1}\sum_{(\ell,k)=1}\Phi_0\Big(\frac k K\Big)\Phi_0\Big(\frac \ell L\Big)
\summmm_{1\leq|h|,|h'|\leq H\\(nn',vd_1\sigma tk)=1\\n\equiv n'\mod{vd_1\sigma t}}
\beta(n,h)\bar{\beta(n',h')}\e\Big(at(n'h-nh')\frac{\bar{\nu d_2 nn'\ell}}{k}\Big) .\]
On pose pour tous entiers~$e, q$ :
\[ B_{e, q}:=\sum_{n, n'\\nn'=q}\sum_{1\leq|t|\leq T, t|e\\n\equiv n'\mod{vd_1\sigma t}\\(nn',vd_1\sigma t)=1}
\summ_{1\leq|h|,|h'|\leq H\\n'h-nh'=e/t}\beta(n,h)\bar{\beta(n',h')} .\]
On a~$B_{e, q}=0$ si~$e$ et~$q$ ne vérifient pas~$|e|\leq 4HNT$ et~$N^2<q\leq4N^2$. Par ailleurs,
\[ \cB=\sum_{e}\sum_{q}B_{e,q}\underset{(k, \nu q \ell)=1}{\sum_{k}\sum_\ell}\Phi_0\Big(\frac k K\Big)\Phi_0\Big(\frac \ell L\Big)
\e\Big(ae\frac{\bar{\nu d_2q \ell}}{k}\Big) .\]
On sépare la contribution des termes avec~$e=0$ :
\begin{equation}\label{B-Be-Bne} \cB = \cB(e=0)+\cB(e\neq 0) \end{equation}
avec
\[ \cB(e=0)\ll KL\sum_{n}\sum_{n'}|\beta_{n}\beta_{n'}|\sum_{1\leq|t|\leq T\\n\equiv n'\mod t}
\summ_{1\leq|h|,|h'|\leq H\\nh'=n'h}1 .\]
La contribution des termes de la somme avec~$n=n'$ est~$O(KNLHT)$. Le reste contribue
\[ \ll KL\sum_{n}|\beta_{n}|\sum_{\delta|n}\sum_{(n',n)=\delta}|\beta_{n'}|\tau(|n-n'|)\sum_{1\leq|h|\leq H\\(n/\delta)|h}1
\ll (HN)^\ee KLNH .\]
On a donc
\begin{equation}\label{majo-Be} \cB(e=0)\ll (HN)^\ee KLNHT.\end{equation}
Le Lemme~\ref{kloo-mean} s'applique à la somme~$\cB(e\neq0)$ avec
\[ C\gets K, \quad D\gets L, \quad N\gets 4|a|HNT, \quad R\gets 4\nu d_2N^2, \quad S\gets 1 \]
et permet d'écrire
\begin{equation}\label{Bne-CS}\begin{aligned}
\cB(e\neq 0) \ll&\ (|a|\nu KLNR)^{\ee}\Big\{K(\nu d_2N^2+|a|HNT)(K+\nu d_2LN^2)\\
&+K^2L\sqrt{(\nu d_2N^2+|a|HNT)N^2}+\nu d_2|a|L^2HTN^3\Big\}^{1/2}\Big\{\sum_{e\neq0}\sum_{q}|B_{e,q}|^2\Big\}^{1/2}
.\end{aligned}\end{equation}
Afin d'estimer le dernier crochet, on sépare la contribution des termes avec~$n=n'$ : on écrit
\begin{equation}\label{decomp-Beq} |B_{e,q}| \ll B_1(|e|,q)+B_2(|e|,q) \end{equation}
avec
\[ B_1(e,q) := \begin{cases}
|\beta_{n}|^2\card\{(t, h, h') | 1\leq|h|,|h'|\leq H, 1\leq|t|\leq T\ |\ tn(h-h')=e\}&\text{ si }q=n^2, n|e\\
0&\text{ sinon},
\end{cases} \]
\[ B_2(e,q):=\sum_{nn'=q\\n\neq n'}|\beta_{n}\beta_{n'}|\sum_{1\leq t\leq T, t|e\\n\equiv n'\mod t}
\summ_{1\leq|h|,|h'|\leq H\\n'h-nh'=e/t}1 .\]
On a
\begin{equation}\label{majo-B1}\begin{aligned}
&\ \sum_{n}\sum_{e'}|B_1(ne',n^2)|^2 \\
\ll&\ \sum_{n}|\beta_{n}|^4\sum_{0<e'\leq2HT}\Big\{\sum_{t|e'}\card\{1\leq |h|,|h'|\leq H\ |\ h-h'=e'/t\}\Big\}^2\\
\ll&\ (HT)^\ee H^3NT
.\end{aligned}\end{equation}
En ce qui concerne~$B_2(e,q)$, on a
\[ B_2(e, q) \leq \sum_{nn'=q\\n\neq n'}|\beta_{n}\beta_{n'}|\sum_{1\leq t\leq T\\t|e}\sum_{1\leq|h|,|h'|\leq H\\n'h-nh'=e/t}1
\ll \tau(e)\Big(1+\frac H N\Big)\sum_{nn'=q}(n,n')|\beta_{n}\beta_{n'}| .\]
On a donc
\begin{align*}
\sum_{e}\sum_{q}B_2(e,q)^2\ll&\ (HNT)^{\ee/2}(H+N)N^{-1}\sum_{e}\sum_{q}\sum_{nn'=q}(n,n')|\beta_{n}\beta_{n'}| B_2(e,q) \\
\ll&\ (HNT)^{\ee/2}(H+N)H^2N^{-1}\sum_{n_1}\sum_{n_2}(n_1,n_2)|\beta_{n_1}\beta_{n_2}|\sum_{n_3n_4=n_1n_2\\n_3\neq n_4}|\beta_{n_3}\beta_{n_4}|\tau(|n_3-n_4|) \\
\ll&\ (HNT)^{\ee}(H+N)H^2N^{-1}\summm_{n_1n_2=n_3n_4}(n_1,n_2)|\beta_{n_1}\beta_{n_2}\beta_{n_3}\beta_{n_4}|.
\end{align*}
On note que
\[ \summm_{n_1n_2=n_3n_4}(n_1,n_2)|\beta_{n_1}\beta_{n_2}\beta_{n_3}\beta_{n_4}|\ll N^2(\log N)^5 .\]
En regroupant cette dernière estimation avec~\eqref{majo-B1}, \eqref{decomp-Beq}, \eqref{Bne-CS}, \eqref{majo-Be}, \eqref{B-Be-Bne}, \eqref{eq-lien-cB} et~\eqref{R-Rprime}, et grâce aux hypothèses~\eqref{cond-prop-kloomean}, on obtient 
\begin{align*}
\cR \ll&\ |a|(KNLR)^{\ee}KM^{-1} + (KNLR)^{5\ee} MR^{-1}\{K^2LR^{-1}\}^{1/2}\Big\{K^2NLM^{-1} \\
&\quad+\{KN^4L+K^2N^2L+KN^3L^2M^{-1}\}^{1/2}RNM^{-1}\Big\}^{1/2}
.\end{align*}
En utilisant~$M\leq R\leq K$, $NL\leq KM$ et~$K\leq NM$, on obtient
\begin{align*}
 \cR(KNL)^{-10\ee} \ll&\ KM^{-1} + M^{1/2}K^{5/4}N^{1/2}L^{3/4}R^{-3/2}\{K^{3/4}L^{1/4}+R^{1/2}N+R^{1/2}K^{1/4}N^{1/2} \}
.\end{align*}
Sous les conditions~\eqref{cond-prop-kloomean}, le membre de droite est~$O(MKNL\cE^{-1})$ et on obtient finalement la majoration voulue
\[ \cR \ll (KNL)^{10\ee} MKNL\cE^{-1} .\]

\bigskip

Dans un second temps, on majore la quantité~$\cR$ en utilisant le Lemme~\ref{majo-kloo-complete}.
On reprend l'étude précédente sans majorer trivialement la contribution du terme~$-ah/(\nu k_1r)$
provenant de l'équation~\eqref{congru-triple}, et en utilisant l'égalité modulo~$1$
\[ \frac{\bar{\nu d_2\ell n}}k\equiv -\frac{\bar k}{\nu d_2n\ell}+\frac1{\nu d_2n\ell k}\quad \mod{1} .\]
Par analogie avec la première majoration dans~\eqref{eq-lien-cB}, on obtient pour un certain~$\sigma|a$ et trois réels~$\xi, \eta, \vth$ dépendant au plus de~$v, d_1, d_2$ la majoration
\begin{align*}
|\cR|\ll&\ (|a|TR)^\ee\nu MR^{-1}\sum_{1\leq|t|\leq T}
\sum_k\sum_{(\ell,k)=1}\frac1{\vphi(vd_1\sigma t)}\sum_{\chi\mod{vd_1\sigma t}}|u_k\lambda_\ell| \\
&\qquad\times\sum_{1\leq|h|\leq H}\Bigg|\sum_{(n,k)=1}\beta_n\chi(n)\e\Big(-\frac{aht\bar k}{\nu d_2n\ell}
+\frac{aht}{\nu d_2n\ell k}+\eta\xi d_2n\ell+d_2 n\ell\vth\Big)\Bigg|.
\end{align*}
où l'on rappelle que~$\cR$, défini par~\eqref{def-cR}, est l'objet que l'on souhaite majorer. On rappelle également que~$T:=\max\{d_1K, d_2NL\}/R$. L'inégalité de Cauchy--Schwarz fournit
\begin{equation}\label{R-CS} |\cR|\ll(|a|TR)^\ee\nu MR^{-1}\big\{KLHT\big\}^{1/2}\cD_0^{1/2} \end{equation}
où
\begin{align*}
\cD_0 := &\sum_{K<k\leq4K\\(k, \nu d_2)=1}\sum_{L<\ell\leq2L\\(\ell, k)=1}\sum_{1\leq|t|\leq T}\frac1{\vphi(vd_1\sigma t)} \\
&\times\sum_{\chi\mod{vd_1\sigma t}} \sum_{1\leq|h|\leq H}\Bigg|\sum_{(n,k)=1}\beta_n\chi(n)
\e\Big(-\frac{aht\bar k}{\nu d_2n\ell}+\frac{aht}{\nu d_2n\ell k}+\eta\xi d_2n\ell+d_2n\ell\vth\Big)\Bigg|^2
.\end{align*}
En développant le carré, il vient
\begin{equation}\label{majo-cD}\begin{aligned}\cD_0 \leq&\ \sum_{1\leq|h|\leq H}\summ_{N<n\leq2N\\N<n'\leq2N}
\summ_{1\leq|t|\leq T,\ L<\ell\leq2L\\n \equiv n'\mod{vd_1\sigma t}}
\Bigg|\sum_{K<k\leq 2K\\(k,\nu d_2nn'\ell)=1}\e\Big(\frac{aht\bar k}{\nu d_2\ell[n, n']}\frac{n-n'}{(n, n')}+\frac{aht(n'-n)}{\nu d_2nn'\ell k}\Big)\Bigg|.
\end{aligned}\end{equation}
On élimine le second terme dans l'exponentielle en intégrant par parties. On écrit le membre de droite de cette égalité sous la forme :
\[ \sum_{z\in\cZ}\Big|\sum_{K<k\leq2K}g_1(z,k)g_2(z,k)\Big| \]
où~$\cZ$ est l'ensemble des quintuplets d'entiers~$(h,n,n',\ell,t)$ correspondant à des indices dans la somme
du membre de droite de~\eqref{majo-cD}, et les fonctions~$g_1$ et~$g_2$ sont définies par
\[ g_1(z, k) := \e\Big(\frac{aht\bar k}{\nu d_2\ell[n, n']}\frac{n-n'}{(n, n')}\Big),
\qquad g_2(z, k) := \e\Big(\frac{aht(n'-n)}{\nu d_2nn'\ell k}\Big) \qquad (z=(h,n,n',\ell,t)\in\cZ).\]
En notant~$G_1(z, k) := \sum_{K<k'\leq k}g_1(z, k')$, le membre de droite de~\eqref{majo-cD} vaut
\begin{align*}
\sum_{z\in\cZ}\Big|\int_{\xi=K+}^{2K+}g_2(z, \xi)\dd G_1(z, \xi)\Big| \leq&\ \sup_{z\in\cZ\\K<\xi\leq2K}|g_2(z,\xi)|\times\sum_{z\in\cZ}\big|G_1(z,2K)\big| \\
&\ +K\sup_{z\in\cZ\\K<\xi\leq2K}\Big|\frac{\partial g_2}{\partial \xi}(z, \xi)\Big|\times\sup_{K<\xi\leq2K}\sum_{z\in\cZ}\big|G_1(z,\xi)\big|.
\end{align*}
On a donc
\begin{equation}\label{D0-DKp} \cD_0 \ll \Big(1+\frac{|a|HT}{\nu d_2KNL}\Big)\sup_{K<K'\leq2K}{\cD(K')} \end{equation}
où on a noté
\[ \cD(K') := \sum_{1\leq|h|\leq H}\summ_{N<n\leq2N\\N<n'\leq2N}\sum_{1\leq|t|\leq T\\n \equiv n'\mod{vd_1\sigma t}}\sum_{L<\ell\leq2L}
\Bigg|\sum_{K<k\leq K'\\(k,\nu d_2nn'\ell)=1}\e\Big(\frac{aht\bar k}{\nu d_2\ell[n, n']}\frac{n-n'}{(n, n')}\Big)\Bigg|. \]
On remarque que~$|a|HT\ll\nu d_2KNL$ par hypothèse.

La contribution des indices~$n, n'$ avec~$n=n'$ est~$O(HTKNL)$. La majoration~\eqref{somme-exp-weil} fournit
\begin{equation}\label{DKp-D1-D2} \cD(K') \ll HTKNL + \cD_1 + (KNL)^\ee\nu d_2\cD_2 \end{equation}
avec
\[ \cD_1 := K\sum_{1\leq|h|\leq H}\summ_{N<n'<n\leq2N\\n\equiv n'\mod{vd_1\sigma t}}\sum_{1\leq|t|\leq T}\sum_{L<\ell\leq 2L}
\frac{(\ell[n,n'],aht(n-n')/(n,n'))}{\ell[n,n']}, \]
\[ \cD_2 := \sum_{1\leq|h|\leq H}\sum_{1\leq|t|\leq T}\summ_{N<n'<n\leq2N\\n\equiv n'\mod{vd_1\sigma t}}\sum_{L<\ell\leq 2L}
\big(\ell[n,n']\big)^{1/2}\Big(\ell[n,n'],aht\frac{n-n'}{(n,n')}\Big)^{1/2}. \]
Soit~$\eta>0$. Dans~$\cD_1$, on sépare les sommants suivant la valeur de~$d=(n, n')$ et on évalue les sommes sur~$h$ puis sur~$\ell$. On obtient
\[ \cD_1 \ll (RKNL|a|)^{\eta}KHN^{-2}\sum_{d\leq 2N}d\sum_{1\leq|t|\leq T}\summ_{N/d<n'<n\leq2N/d\\(n,n')=1\\dn\equiv dn'\mod{vd_1\sigma t}}
(dnn',at(n-n')) .\]
Dans cette somme, on a~$(nn', n-n')=1$. Le terme général de la dernière somme est donc inférieur à~$d(nn',at)$.
En séparant de nouveau suivant la valeur de~$\delta=(t, d)$, on obtient
\[ \cD_1 \ll (RKNL|a|)^{\eta}KHN^{-2}\sum_{\delta\leq T}\delta^2\sum_{d\leq 2N/\delta}d^2\sum_{1\leq|t|\leq T/\delta\\(t,d)=1}
\summ_{N/(d\delta)<n'<n\leq2N/(d\delta)\\(n,n')=1\\dn\equiv dn'\mod{vd_1\sigma t}}
(nn',at\delta) .\]
La condition de congruence sur~$n, n'$ implique~$t|d(n-n')$ donc~$t|n-n'$. Puisque~$(n,n')=1$, on a~$(t,nn')=1$, ainsi en permutant et en évaluant d'abord la somme en~$t$ par~$O(K^\ee)$, on obtient
\begin{equation}\label{majo-D1}\begin{aligned}\cD_1&\ \ll (RK^2NL|a|)^{\eta}KHN^{-2}\sum_{\delta\leq T}\delta^2\sum_{d\leq 2N/\delta}d^2\Big(\sum_{n\leq2N/(d\delta)}(n,a\delta)\Big)^2\\&\ \ll (RK^2N^2|a|^2)^{\eta}KHN .\end{aligned}\end{equation}
Dans~$\cD_2$, on procède de même. En séparant les sommants suivant la valeur de~$d=(n,n')$
et en évaluant les sommes sur~$h$ et~$\ell$, on obtient
\[ \cD_2\ll (RKNL|a|)^\eta HNL^{3/2}\sum_{d\leq2N}d^{-1/2}\sum_{1\leq|t|\leq T}\summ_{N/d<n'<n\leq2N/d\\(n,n')=1\\dn\equiv dn'\mod{vd_1\sigma t}}
\big(dnn',at(n-n')\big)^{1/2} .\]
En séparant suivant la valeur de~$\delta=(t, d)$, on obtient
\[ \cD_2\ll (RKNL|a|)^\eta HNL^{3/2}\sum_{\delta\leq T}\delta^{-1/2}\sum_{d\leq2N/\delta}d^{-1/2}\sum_{1\leq|t|\leq T/\delta\\(t,d)=1}
\summ_{N/(d\delta)<n'<n\leq2N/(d\delta)\\(n,n')=1\\dn\equiv dn'\mod{vd_1\sigma t}} \big(dnn',at(n-n')\big)^{1/2} .\]
Le terme général de la dernière somme est inférieur à~$d^{1/2}(nn',at(n-n'))^{1/2} = d^{1/2}(nn',at)^{1/2} = d^{1/2}(nn',a)^{1/2}$, ainsi
\begin{equation}\label{majo-D2}\begin{aligned}\cD_2&\ \ll (RK^2NL|a|)^{\eta} HNL^{3/2}\sum_{\delta\leq T}\delta^{-1/2}\sum_{d\leq2N/\delta}
\Big(\sum_{n\leq2N/(d\delta)}(n,a)^{1/2}\Big)^2\\&\ \ll (RK^2N^2L|a|^2)^{\eta}HL^{3/2}N^3 .\end{aligned}\end{equation}

Avec~$\eta=\ee/2$, on injecte les majorations~\eqref{majo-D1} et~\eqref{majo-D2} dans~\eqref{DKp-D1-D2} puis~\eqref{D0-DKp} pour obtenir
\[ \cD_0\ll (RKNL|a|)^{\ee}HTKNL+(KR|a|)^\ee\nu d_2HL^{3/2}N^3 \]
et finalement, grâce à~\eqref{R-CS},
\[ |\cR|\ll (RKNL|a|)^{3\ee}\big\{TKLN^{1/2}+T^{1/2}K^{1/2}N^{3/2}L^{5/4}\big\} .\]
Par définition, $T\leq d_1KR^{-1}$, les conditions~\eqref{cond-prop-klooweil} impliquent donc
\[ TKLN^{1/2}\leq d_1 MKNLR^{-1}\cE^{-1}, \qquad T^{1/2}K^{1/2}N^{3/2}L^{5/4}\leq d_1^{1/2}MKNLR^{-1}\cE^{-1} \]
ce qui implique la majoration voulue
\[ |\cR|\ll (MKNL)^{10\ee}MKNLR^{-1}\cE^{-1} .\]

\subsection{Contribution des termes principaux}\label{section-tp}

Les calculs des sections précédentes et plus particulièrement les équations~\eqref{majo-delta-init}, \eqref{dispersion}, \eqref{estim-S3}, \eqref{estim-S2} et~\eqref{estim-S1} montrent que le membre de gauche de~\eqref{estim-MNL} est majoré par
\begin{equation}\label{fin-dispersion}
\Delta(M, N, L, R) \ll \Big( \hat f(0) MR\Big\{X_3-2\Re X_2 + X_1\Big\} \Bigg)^{1/2} + O(x^{1-\delta}) \end{equation}
pour un certain~$\delta>0$. On remarque que
\begin{align*}
X_1-2\Re X_2+X_3=&\ \sum_{R<r\leq 2R\\(r,a_1a_2)=1}\frac 1 r\sum_{0<b<r\\(b,r)=1}
\Bigg|\sum_{k\equiv b\mod{r}}u_k-\frac 1{\vphi(r)}\sum_{(k,r)=1}u_k\omega_\ee(k\bar b ; r)\Bigg|^2 \\
\leq&\ \frac 1 R\sum_{R<r\leq2R}\frac 1{\vphi(r)}\sum_{\chi\prim\\\cond(\chi)>x^\ee\\\cond(\chi)|r}\Big|\sum_{(k,r)=1}u_k\chi(k)\Big|^2.
\end{align*}
Des calculs similaires à~\cite[Section~4]{HarperBV2012} fournissent
\begin{equation*}\begin{aligned}
X_1-2\Re X_2+X_3\leq&\ \frac 1 R\sum_{x^\ee<s\leq 2R}\sum_{\chi\mod{s}\\\chi\prim}\sum_{r\leq 2R\\s|r}\frac 1{\vphi(r)}
\Big|\sum_{(k,r/s)=1}u_k\chi(k)\Big|^2 \\
\leq &\ \frac 1 R\sum_{x^\ee<s\leq2R}\sum_{\chi\mod{s}\\\chi\prim}\sum_{r\leq 2R\\s|r}\frac{\tau(r/s)}{\vphi(r)}
\sum_{d|r/s}\Big|\sum_{d|k}u_k\chi(k)\Big|^2 \\
\ll &\ \frac{(\log R)^2}R \sum_{d\leq R}\frac{\tau(d)}{\vphi(d)}\sum_{x^\ee<s\leq2R}\frac1{\vphi(s)}
\sum_{\chi\mod{s}\\\chi\prim}\Big|\sum_{k'}u_{dk'}\chi(k')\Big|^2 \\
= &\ \frac{(\log R)^2}R \sum_{d\leq R}\frac{\tau(d)}{\vphi(d)}\int_{x^\ee}^\infty\sum_{x^\ee<s\leq\min\{2R,t\}}\frac s{\vphi(s)}
\sum_{\chi\mod{s}\\\chi\prim}\Big|\sum_{k'}u_{dk'}\chi(k')\Big|^2\frac{\dd t}{t^2} \\
\ll &\ \frac{(\log R)^2}R \sum_{d\leq R}\frac{\tau(d)}{\vphi(d)}\Big(\frac K{dx^\ee}+R\Big)\frac K d \\
\ll &\ K^2R^{-1}x^{-\ee/2}.
\end{aligned}\end{equation*}
Dans l'avant-dernière inégalité, on a fait usage du grand crible sous la forme du Lemme~\ref{gd-crible}.
En injectant cela dans~\eqref{dispersion} et en utilisant~$\hat f(0)=O(M)$ et~$KM=x$, on obtient pour un certain réel positif~$\delta$,
\[ \Delta(M,N,L,R)\ll x^{1-\delta} .\]
Cela démontre le Théorème~\ref{thm-BFI-MNL}.

\subsection{Fin de la démonstration du Théorème~\ref{thm-BFI-Sxy}}

Lorsque~$y$ n'est pas trop proche de~$x$, la fonction caractéristique de l'ensemble~$S(x,y)$ est facilement approchée par
une combinaison linéaire de convolutions. Cette propriété de factorisation découle du lemme suivant, {\it cf.}~\cite[lemme~3.2]{FT96}.
\begin{lemme}\label{lemme-facto}
Étant donné $y\geq 2$, $N_1, N_2 \geq 1$, il existe pour tout~$n>yN_1N_2$ avec~$P^+(n)\leq y$ une unique factorisation~$n = n_0 n_1 n_2$
telle que
\[ N_2 < n_2 \leq N_2 P^-(n_2), \qquad P^+(n_2) \leq y, \]
\[ N_1 < n_1 \leq N_1 P^-(n_1), \qquad P^+(n_1) \leq P^-(n_2), \]
\[ P^+(n_0) \leq P^-(n_1) .\]
\end{lemme}
\begin{prop}\label{prop-BFI-Sxy-dyadique}
Soit~$\ee>0$. Il existe des réels~$c, \delta>0$ tels que lorsque~$a_1, a_2\in\bfZ\setminus\{0\}$, $(a_1, a_2)=1$ et~$x, y\in\bfR$
avec~$(\log x)^c\leq y\leq x^{1/c}$, on ait
\begin{equation}\label{dyad-estim}\sum_{r\leq x^{3/5-4\ee}\\(r, a_1a_2)=1}\Big|\sum_{x<n\leq2x\\P^+(n)\leq y\\n\equiv a_1\bar{a_2}\mod{r}}1
-\frac1{\vphi(r)}\sum_{x<n\leq2x\\P^+(n)\leq y\\(n, r)=1}\omega_\ee(n\bar{a_1}a_2 ; r)\Big| \ll x^{1-\delta} \qquad (|a_1|, |a_2|\leq x^\delta), \end{equation}
\begin{equation}\label{dyad-estim-unif}\sum_{r\leq x^{6/11-5\ee}\\(r, a_1a_2)=1}\Big|\sum_{x<n\leq2x\\P^+(n)\leq y\\n\equiv a_1\bar{a_2}\mod{r}}1
-\frac1{\vphi(r)}\sum_{x<n\leq2x\\P^+(n)\leq y\\(n, r)=1}\omega_\ee(n\bar{a_1}a_2 ; r)\Big| \ll x^{1-\delta} \qquad (|a_1|\leq x^{1-\ee},|a_2|\leq x^\delta). \end{equation}
\end{prop}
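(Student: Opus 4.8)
The plan is to deduce this from Theorem~\ref{thm-BFI-MNL} by decomposing the characteristic function of the $y$-friable integers in $]x,2x]$ into a manageable linear combination of triple convolutions $\alpha_m\beta_n\lambda_\ell$ with $\alpha$ supported on a dyadic block $]M,2M]$, and similarly for $\beta,\lambda$. First I would apply Lemma~\ref{lemme-facto} with well-chosen parameters $N_1,N_2$ to write each $n>yN_1N_2$ with $P^+(n)\leq y$ uniquely as $n=n_0n_1n_2$, the factors satisfying the stated size and prime-factor conditions; the contribution of the exceptional range $n\leq yN_1N_2$ is handled trivially (its cardinality in each residue class being $O(yN_1N_2/r+1)$, summed over $r$), which is admissible provided $yN_1N_2$ is a small power of~$x$, hence the hypothesis $y\leq x^{1/c}$. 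After a dyadic partition of the variables $n_0,n_1,n_2$ into ranges $]M,2M]$, $]N,2N]$, $]L,2L]$ (at the cost of $(\log x)^{O(1)}$ such blocks), the sum becomes $\sum_{mn\ell=n}\alpha_m\beta_n\lambda_\ell$ with coefficients of modulus $\leq 1$: on the $n_1,n_2$ blocks one simply takes indicator-type weights encoding the ``champion factor'' conditions, and $\alpha_m$ is unrestricted.

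The second step is to verify that, with a suitable choice of $N_1,N_2$ (as functions of a small parameter, say $N_1=x^{\theta_1}$, $N_2=x^{\theta_2}$), \emph{every} dyadic block $(M,N,L)$ arising in this decomposition satisfies one of the two systems \eqref{cond-thm-pcp-kloomean} or \eqref{cond-thm-pcp-klooweil} for $R\le x^{3/5-4\ee}$ (resp.\ $R\le x^{6/11-5\ee}$). This is where the exponents $3/5$ and $6/11$ come from: the constraints $L\le x^{-\ee}M$, $M\le R\le x^{-\ee}NL$, together with the ``balancing'' inequalities $N^2L^3\le x^{1-\ee}R$ and $N^4L^3\le x^{2-\ee}$ (resp.\ $N^3L^4\le x^{2-\ee}$, $N^6L^5\le x^{4-\ee}R^{-2}$), must be arranged by playing the free choice of where to cut. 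The point is that $m=n_0$ can be taken as large as we wish (it carries no structure), $n_2$ can be forced to be small (by choosing $N_2$ small), and $n_1$ is of intermediate, controllable size; so after a short optimisation over $(\theta_1,\theta_2)$ one checks that $R$ up to $x^{3/5-O(\ee)}$ (resp.\ $x^{6/11-O(\ee)}$) is admissible, matching the heuristic limiting values $N=x^{1/5+\cdots}$, $L=x^{2/5-\cdots}$, $M=x^{2/5+\cdots}$ discussed after Theorem~\ref{thm-BFI-MNL}. There are $O((\log x)^{3})$ blocks, each giving $O(x^{1-\delta_0})$ by Theorem~\ref{thm-BFI-MNL}, so the total is $O(x^{1-\delta})$ for a slightly smaller $\delta>0$, which is \eqref{dyad-estim}.

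Two technical points need care. The residue class $a_1\bar{a_2}$ is preserved throughout (the identity $n_0n_1n_2\equiv a_1\bar{a_2}\pmod r$ is exactly the congruence in \eqref{estim-MNL} with $mn\ell=n$), and the ``main term'' side involving $\omega_\ee(n\bar{a_1}a_2;r)=\sum_{n_0n_1n_2=n}\omega_\ee(\cdots)$ matches term-by-term after the same decomposition, so no new main-term analysis is required beyond what is already in Theorem~\ref{thm-BFI-MNL}; this is why the statement is phrased with $\omega_\ee$ rather than the naive $\Psi_r(2x,y)/\vphi(r)$. The condition $(\log x)^c\le y$ is not used here but is carried along because the final passage to Theorem~\ref{thm-BFI-Sxy} (removing the $\omega_\ee$ terms via Lemma~\ref{lemme-harper} and undoing the dyadic decomposition in $x$) needs it; within this proposition $y$ merely has to be small enough that $yN_1N_2=x^{1-\eta}$ for some $\eta>0$. \textbf{The main obstacle} is the bookkeeping of step two: showing that the dyadic blocks really do fall, without exception, under \eqref{cond-thm-pcp-kloomean} or \eqref{cond-thm-pcp-klooweil} for the claimed range of $R$ — in particular handling the ``boundary'' blocks where one of $N$, $L$ is very small or $NL$ is close to $x^{2/3}$, where one must sometimes switch from the first system to the second, or absorb a block into the trivial estimate. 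The inequality $NL\le x^{2/3-5\ee}$ in both systems is the binding one and dictates how fine the initial factorisation parameters must be chosen.
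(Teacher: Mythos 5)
Your overall strategy --- factor via Lemma~\ref{lemme-facto}, cut into dyadic blocks, and feed each block into Theorem~\ref{thm-BFI-MNL} --- is indeed the paper's, but you have misplaced the main difficulty, and the proposal has a genuine gap at the point you pass over in one sentence. After the factorisation $n=n_0n_1n_2$, the summand carries the conditions $x<n_0n_1n_2\leq 2x$, $P^+(n_0)\leq P^-(n_1)$ and $P^+(n_1)\leq P^-(n_2)$, which couple the three variables: they are \emph{not} of the product form $\alpha_m\beta_n\lambda_\ell$ required by Theorem~\ref{thm-BFI-MNL}, and ``indicator-type weights encoding the champion factor conditions'' are weights on pairs of variables, not on single variables. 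The paper's proof spends most of its effort precisely here: it separates the four coupled inequalities by applying the truncated integral formula \eqref{perron-like} with $T=x^2$ four times, turning the coupled indicator into an integral over $\cI^4$ of products of unimodular single-variable coefficients $\alpha^{(j)}_m\beta^{(j)}_n\lambda^{(j)}_\ell$, up to an admissible error controlled by a trivial bound on $\sum_{n}\tau_3(n)|E_\ee(n;a_1\bar{a_2},r)|$. Without this (or an equivalent separation device) Theorem~\ref{thm-BFI-MNL} simply does not apply to your blocks. By contrast, what you single out as ``the main obstacle'' (checking the two systems of inequalities) is a routine verification once the cut-points are chosen correctly.

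Two further points. A single choice of cut-points cannot serve the whole range of moduli: if $(M,N,L)$ is fixed, the condition $R\leq x^{-\ee}NL$ at $R=x^{3/5-4\ee}$ forces $NL\geq x^{3/5-3\ee}$, then $N^2L^3\leq x^{1-\ee}R$ at $R=x^{1/2-\ee}$ forces $L\leq x^{3/10+O(\ee)}$, hence $N\geq x^{3/10-O(\ee)}$ and $N^4L^3=(NL)^3N\geq x^{21/10-O(\ee)}$, contradicting $N^4L^3\leq x^{2-\ee}$. One must therefore first decompose dyadically in $R$ and take the cuts as functions of $R$; the paper uses $M_0=x^{1/2+2\ee}R^{-1/6}$, $L_0=x^{1/2-\ee}R^{-1/2}$, $N_0=x^{-\ee}R^{2/3}$ for $x^{4/9}\leq R\leq x^{3/5-4\ee}$. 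Finally, the constraints $M\leq R$ and $NL\leq x^{2/3-5\ee}$ of \eqref{cond-thm-pcp-kloomean} force $R\geq x^{1/3+5\ee}$, so the dispersion estimate cannot reach the small moduli at all; the paper only uses it for $r>x^{1/2-\ee}$ and disposes of the range $r\leq x^{1/2-\ee}$ with Lemma~\ref{lemme-harper}. Harper's input is thus needed already inside this proposition, not only in the final deduction of Theorem~\ref{thm-BFI-Sxy} as your plan suggests.
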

On rappelle que~$\omega_\ee(k ; r)$ est défini par~\eqref{def-omega}.
\begin{proof}
On montre dans un premier temps l'estimation~\eqref{dyad-estim}. Soit~$\ee>0$ et~$c>1/\ee$. On suppose~$(\log x)^c\leq y\leq x^{1/c}$
et on définit pour tout~$a\mod{r}$, $(a, r)=1$,
\[ E_\ee(n;a,r) := \bfUn_{n \equiv a\mod{r}} - 1/\vphi(r) \bfUn_{(n, r)=1}\omega_\ee(n\bar{a} ; r) .\]
Pour tout~$R\in\bfR$ avec~$x^{4/9}\leq R\leq x^{3/5-4\ee}$, on pose
\[ M_0 := x^{1/2+2\ee}R^{-1/6}, \quad L_0:=x^{1/2-\ee}R^{-1/2}, \quad N_0:=x^{-\ee}R^{2/3}. \]
On a~$yM_0N_0<x$, donc d'après le Lemme~\ref{lemme-facto},
\begin{align*}&\ \sum_{R<r\leq2R\\(r, a_1a_2)=1}\Big|\sum_{x<n\leq2x\\P^+(n)\leq y}E_\ee(n;a_1\bar{a_2}, r)\Big|
\\=&\ \sum_{R<r\leq2R\\(r, a_1a_2)=1}\Big|\sum_{L_0<\ell\leq L_0P^-(\ell)\\P^+(\ell)\leq y}\sum_{M_0<m\leq M_0P^-(m)\\P^+(m)\leq P^-(\ell)}
\sum_{x<mn\ell\leq2x\\P^+(n)\leq P^-(m)}E_\ee(mn\ell;a_1\bar{a_2}, r)\Big| .\end{align*}
Pour tous~$M, N, L$ tels que
\[ M_0\leq M\leq yM_0/2, \qquad L_0\leq L\leq yL_0/2, \qquad y^{-2}N_0\leq N\leq N_0, \]
on considère la somme
\begin{align*} \Delta^*(M, N, L, R) := \sum_{R<r\leq2R\\(r, a_1a_2)=1}\Bigg|
\sum_{L<\ell\leq2L\\\ell\leq L_0P^-(\ell)\\P^+(\ell)\leq y}
\sum_{M<m\leq2M\\m\leq M_0P^-(m)}&\sum_{N<n\leq2N}E_\ee(mn\ell;a_1\bar{a_2}, r)\times\\&\times\bfUn_{x<mn\ell\leq 2x}\bfUn_{P^+(n)\leq P^-(m)}\bfUn_{P^+(m)\leq P^-(\ell)}\Bigg| .\end{align*}
On a
\[ \sum_{R<r\leq2R\\(r, a_1a_2)=1}\Big|\sum_{x<n\leq2x\\P^+(n)\leq y}E_\ee(n;a_1\bar{a_2}, r)\Big|
\leq \sum_{i=0}^{\floor{\log y/\log 2}}\sum_{j=0}^{\floor{\log y/\log 2}}\sum_{k=0}^{\floor{2\log y/\log 2}}\Delta^*(M_02^i, N_02^{-k}, L_02^j, R) .\]

On remarque que pour tout~$\vth\in\bfR\setminus\{0\}$ et~$T\geq 1$ on a
\begin{equation}\label{perron-like} \bfUn_{\vth>0} = \frac12+\frac1{2\pi i}\int_{1/T\leq|t|\leq T}\frac{\e^{i\vth t}}t\dd t
+O\Big(\frac{|\vth|+|\vth|^{-1}}T\Big) .\end{equation}
En effet, le membre de gauche vaut
\[\frac12+\int_0^\infty\frac{\sin(\vth t)}{\pi t}\dd t \]
et on a les inégalités
\[ \Big| \int_0^{1/T}\frac{\sin(\vth t)}{\pi t}\dd t \Big| \leq \frac{|\vth|}{\pi T}\quad\text{et}\quad\Big| \int_T^\infty\frac{\sin(\vth t)}{\pi t}\dd t\Big|
\leq \frac{|\cos(\vth T)|}{\pi |\vth|T}+\int_T^\infty\frac{|\cos(\vth t)|}{\pi |\vth|t^2}\dd t
\leq \frac2{\pi|\vth|T}.\]
Dans l'expression définissant~$\Delta^*(M,N,L,R)$, on applique l'estimation~\eqref{perron-like} avec~$T=x^2$ quatre fois, pour les paramètres
\begin{align*} \vth \in
\{&\log(\floor{2x}+1/2)-\log(mn\ell),\quad \log(mn\ell)-\log(\floor{x}+1/2),\\
&P^-(m)+1/2-P^+(n),\qquad\quad P^-(\ell)+1/2-P^+(m)\}. \end{align*}
On vérifie que pour chaque tel~$\vth$, on a~$\max\{|\vth|, |\vth|^{-1}\}\ll y^2x$. On obtient donc
\begin{equation}\label{delta-T}\begin{aligned}
\Delta^*(M, N, L, R) = \sum_{R<r\leq2R\\(r, a_1a_2)=1}&\Big|\sum_{L<\ell\leq2L\\\ell\leq L_0P^-(\ell)\\P^+(\ell)\leq y}\sum_{M<m\leq2M\\m\leq M_0P^-(m)}\sum_{N<n\leq2N}
T(m, n, \ell, x) E_\ee(mn\ell;a_1\bar{a_2}, r) \Big|
\\&\quad + O\Big(\frac{y^2(\log x)^3}{x}\sum_{R<r\leq2R\\(r,a_1a_2)=1}\sum_{n\leq2x}\tau_3(n)|E_\ee(n;a_1\bar{a_2}, r)|\Big)
\end{aligned}\end{equation}
avec, en notant~$\cI:=[-x^2, -x^{-2}]\cup[x^{-2}, x^2]$,
\begin{align*}
T(m,n,\ell,x) := &\ 
\Big(\frac12+\frac1{2\pi i}\int_\cI(\floor{2x}+1/2)^{it}(mn\ell)^{-it}\frac{\dd t}t\Big)
\Big(\frac12+\frac1{2\pi i}\int_\cI(mn\ell)^{it}(\floor{x}+1/2)^{-it}\frac{\dd t}t\Big)\\\times&\ 
\Big(\frac12+\frac1{2\pi i}\int_\cI\e^{(P^-(m)+1/2)it}\e^{-P^+(n)it}\frac{\dd t}t\Big)
\Big(\frac12+\frac1{2\pi i}\int_\cI\e^{(P^-(\ell)+1/2)it}\e^{-P^+(m)it}\frac{\dd t}t\Big),
\end{align*}
et où~$\tau_3(n)$ désigne le nombre de représentations de~$n$ en produits de 3 entiers. On a
\[ \sum_{R<r\leq2R\\(r,a_1a_2)=1}\sum_{n\leq2x}\tau_3(n)|E_\ee(n;a_1\bar{a_2}, r)| \ll R+\sum_{n\leq2x\\a_2n\neq a_1}\tau_3(n)\tau(|a_2n-a_1|)
+(\log R)x^{2\ee}\sum_{n\leq2x}\tau_3(n)\ll x^{1+3\ee} .\]
On développe le terme~$T(m, n, \ell, x)$. Dans le but de simplifier la forme de l'expression obtenue, on note~$K := i\pi/(8\log x)$ de sorte que
\[ \frac12 = \frac1{2\pi i}\int_\cI\frac{K\dd t}t .\]
On obtient
\[ T(m,n,\ell,x) = \sum_{j=0}^{15}\frac1{(2\pi)^4}\iiiint_{\cI^4}K^{e_j}\alpha^{(j)}_m\beta^{(j)}_n\lambda^{(j)}_\ell\gamma^{(j)}_x
\frac{\dd t_1\dd t_2\dd t_3\dd t_4}{t_1t_2t_3t_4} \]
pour certains nombres complexes~$\alpha^{(j)}_m, \beta^{(j)}_n, \lambda^{(j)}_\ell, \gamma^{(j)}_x$ de modules~$1$, pouvant dépendre des~$t_k$,
et certains entiers positifs ou nuls~$e_j$. En injectant cela dans l'expression~\eqref{delta-T}, on obtient
\[
\Delta^*(M,N,L,R) \ll \sum_{j=0}^{15}\iiiint_{\cI^4}\Big\{\cdots\Big\}\frac{\dd t_1\dd t_2\dd t_3\dd t_4}{|t_1t_2t_3t_4|} + x^{6\ee}.
\]
\[ \Big\{\cdots\Big\} = \sum_{R<r\leq2R\\(r,a_1a_2)=1}
\Big|\sum_{L<\ell\leq2L\\\ell\leq L_0P^-(\ell)\\P^+(\ell)\leq y}\sum_{M<m\leq2M\\m\leq M_0P^-(m)}\sum_{N<n\leq2N}
\alpha^{(j)}_m\beta^{(j)}_n\lambda^{(j)}_\ell E_\ee(mn\ell;a_1\bar{a_2}, r)\Big| \]
Par construction de~$M_0, N_0, L_0$, les conditions~\eqref{cond-thm-pcp-kloomean} sont satisfaites vis-à-vis de~$M, N, L, R$,
il existe donc~$\delta>0$ tel que pour tout~$j\in\{0, \ldots, 15\}$ et uniformément lorsque~$(t_1, \ldots, t_4)\in\cI^4$, on~ait
\[ \sum_{R<r\leq2R\\(r,a_1a_2)=1}\Big|\sum_{L<\ell\leq2L\\\ell\leq L_0P^-(\ell)\\P^+(\ell)\leq y}\sum_{M<m\leq2M\\m\leq M_0 P^-(m)}\sum_{N<n\leq2N}
\alpha^{(j)}_m\beta^{(j)}_n\lambda^{(j)}_\ell E_\ee(mn\ell;a_1\bar{a_2}, r)\Big| \ll x^{1-\delta} .\]
On obtient~$\Delta^*(M, N, L, R) \ll (\log x)^4x^{1-\delta}$, puis
\[ \sum_{R<r\leq2R\\(r, a_1a_2)=1}\Big|\sum_{x<n\leq2x\\P^+(n)\leq y}E_\ee(n;a_1\bar{a_2}, r)\Big| \ll (\log y)^3(\log x)^4x^{1-\delta}.\]
En sommant pour~$R=x^{3/5-4\ee}2^{-j}$, $j\in\{1, \ldots, \floor{(1/10-3\ee)\log x/\log 2}+1\}$, on obtient
\[ \sum_{x^{1/2-\ee}<r\leq x^{3/5-4\ee}\\(r, a_1a_2)=1}\Big|\sum_{x<n\leq2x\\P^+(n)\leq y}E_\ee(n;a_1\bar{a_2}, r)\Big| \ll (\log y)^3(\log x)^5x^{1-\delta}\ll x^{1-\delta/2} .\]
Il découle par ailleurs du Lemme~\ref{lemme-harper} que
\[ \sum_{r\leq x^{1/2-\ee}\\(r, a_1a_2)=1}\Big|\sum_{x<n\leq2x\\P^+(n)\leq y}E_\ee(n;a_1\bar{a_2}, r)\Big| \ll x^{1-\delta/2} \]
quitte à diminer la valeur de~$\delta$ et augmenter celle de~$c$. Ceci prouve la Proposition~\ref{prop-BFI-Sxy-dyadique}.

L'estimation~\eqref{dyad-estim-unif} se montre par une méthode similaire, pour les choix des paramètres suivants :
\[ x^{4/9}\leq R\leq x^{6/11-5\ee}, \quad M_0:=x^{1/3+2\ee}R^{2/9}, \quad L_0:=R^{2/3}x^{-\ee}, \quad N_0:=x^{2/3-\ee}R^{-8/9}. \]
\end{proof}

\begin{proof}[Démonstration du Théorème~\ref{thm-BFI-Sxy}]
Soit~$\ee>0$ et~$c, \delta$ les réels donnés par la Proposition~\ref{prop-BFI-Sxy-dyadique}. Soient~$x, y$ des réels tels que~$(\log x)^c\leq y\leq x^{1/c}$. On suppose dans un premier temps~$\max\{|a_1|, |a_2|\}\leq x^\delta$. Avec~$I := \floor{3\ee\log x/\log 2}$, on a
\begin{align*} \sum_{r\leq x^{3/5-6\ee}\\(r, a_1a_2)=1}\Big|\sum_{n\in S(x, y)}E_\ee(n;a_1\bar{a_2}, r)\Big|
\leq&\ \sum_{i=0}^{I-1}\sum_{r\leq x^{3/5-6\ee}\\(r, a_1a_2)=1}\Big|\sum_{x2^{-i-1}<n\leq x2^{-i}\\P^+(n)\leq y}E_\ee(n;a_1\bar{a_2}, r)\Big|
\\&\ + \sum_{r\leq x^{3/5-6\ee}\\(r, a_1a_2)=1}\Big|\sum_{n\leq x2^{-I}\\P^+(n)\leq y}E_\ee(n;a_1\bar{a_2}, r)\Big| .\end{align*}
On vérifie que l'on a~$x^{3/5-6\ee}\leq (x2^{-I})^{3/5-4\ee}$, ce qui assure que l'estimation~\eqref{dyad-estim}
de la Proposition~\ref{prop-BFI-Sxy-dyadique} s'applique à chaque sommant de la somme sur~$i$. Par ailleurs,
\[ \sum_{r\leq x^{3/5-6\ee}\\(r, a_1a_2)=1}\Big|\sum_{n\leq x2^{-I}\\P^+(n)\leq y}E_\ee(n;a_1\bar{a_2}, r)\Big|
\ll x^{3/5-6\ee}+\sum_{n\leq x2^{-I}\\a_2n\neq a_1}\tau(|a_2n-a_1|)+x^{2\ee}(\log R)\sum_{n\leq x2^{-I}}1 \ll x^{1-\ee/2} .\]
On obtient donc pour deux réels strictement positifs~$c, \delta$, lorsque~$(\log x)^c\leq y\leq x^{1/c}$,
\begin{equation}\label{majo-E-complete} \sum_{r\leq x^{3/5-6\ee}\\(r, a_1a_2)=1}\Big|\sum_{n\in S(x, y)}E_\ee(n;a_1\bar{a_2}, r)\Big|
\ll (\log x)x^{1-\delta} + x^{1-\ee/2} \ll x^{1-\delta/2} \end{equation}
quitte à réduire la valeur de~$\delta$. En écrivant pour tout~$a\mod{q}$ avec $(a, q)=1$ :
\[ E(x, y; a, q) = \sum_{n\in S(x, y)\\(n, q)=1}E_\ee(n;a, q) + \frac1{\vphi(q)}\sum_{n\in S(x, y)\\(n, q)=1}
\sum_{\chi\prim\\1<\cond(\chi)\leq x^\ee\\\cond(\chi)|q}\chi(n)\bar{\chi(a)}, \]
on obtient l'inégalité suivante, où la variable~$q$ joue le rôle de la variable~$r$ de la majoration~\eqref{majo-E-complete},
\begin{align*}
&\ \sum_{q\leq x^{3/5-6\ee}\\(q, a_1a_2)=1}\Big|E(x, y; a_1\bar{a_2}, q)| \\ \leq&\ 
\sum_{q\leq x^{3/5-6\ee}\\(q, a_1a_2)=1}\Big|\sum_{n\in S(x, y)}E_\ee(n;a_1\bar{a_2}, q)\Big|
+ \sum_{q\leq x^{3/5-6\ee}\\(q, a_1a_2)=1}\frac1{\vphi(q)}\Big|\sum_{n\in S(x, y)\\(n, q)=1}
\sum_{\chi\prim\\1<\cond(\chi)\leq x^\ee\\\cond(\chi)|q}\chi(n\bar{a_1}a_2)\Big| \\
\leq&\ \sum_{q\leq x^{3/5-6\ee}\\(q, a_1a_2)=1}\Big|\sum_{n\in S(x, y)}E_\ee(n;a_1\bar{a_2}, q)\Big|
+ \sum_{q\leq x^{3/5-6\ee}}\frac1{\vphi(q)}\sum_{\chi\mod{q}\\\chi\neq\chi_0\\\cond(\chi)\leq x^\ee}\Big|\sum_{n\in S(x, y)}\chi(n)\Big|.
\end{align*}
Le Lemme~\ref{lemme-harper} implique
\[ \sum_{q\leq x^{3/5-6\ee}}\frac1{\vphi(q)}\sum_{\chi\mod{q}\\\chi\neq\chi_0\\\cond(\chi)\leq x^\ee}\Big|\sum_{n\in S(x, y)}\chi(n)\Big| 
\ll_A \Psi(x, y)\Big\{H(u)^{-\delta}(\log x)^{-A} + y^{-\delta} \Big\} \]
pour tout~$A\geq 0$ (la constante étant effective si~$A<1$), quitte à diminuer la valeur de~$\delta$ et augmenter celle de~$c$. L'inégalité~$x^{1-\delta/2}\ll \Psi(x, y) y^{-\delta/4}$ permet de conclure :
lorsque~$(\log x)^c\leq y\leq x^{1/c}$, on a
\[ \sum_{q\leq x^{3/5-6\ee}\\(q, a_1a_2)=1}|E(x, y; a_1\bar{a_2}, q)| \ll_A \Psi(x, y)\Big\{H(u)^{-\delta}(\log x)^{-A} + y^{-\delta/4} \Big\} .\]

Lorsque~$|a_1|\leq x^{1-\ee}$ et~$|a_2|\leq x^\delta$, on montre par une méthode identique, mais en utilisant l'estimation~\eqref{dyad-estim-unif}
de la Proposition~\ref{prop-BFI-Sxy-dyadique}, que
\[ \sum_{q\leq x^{6/11-7\ee}\\(q, a_1a_2)=1}|E(x, y; a_1\bar{a_2}, q)| \ll_A \Psi(x, y)\Big\{H(u)^{-\delta}(\log x)^{-A} + y^{-\delta/4} \Big\} .\]

\end{proof}

\begin{proof}[Démonstration du Corollaire~\ref{coro-max}]
Soient~$\ee$ fixé et~$c, \delta$ les constantes données par le Théorème~\ref{thm-BFI-Sxy}. On suppose sans perte de généralité que~$\ee<1/5$. On pose
\[ \Delta := 1 + H(u)^{-\delta/2}(\log x)^{-A/2}+y^{-\delta/2}, \quad J := \floor{\frac{\log x}{\log \Delta}\times\frac{\ee}{1+\ee}}, \]
et~$z_j := x\Delta^{-j}$ pour~$j\in\bfN$. Ce choix implique~$\Delta^J\leq x^{\ee/(1+\ee)}$. Lorsque~$z\in[z_j, z_{j+1}]$, ~$y\leq x$ et~$(a, q)=1$, on a
\[ \Psi(z_{j+1}, y ; q, a) - \frac{\Psi_q(z, y)}{\vphi(q)} \leq E(z, y ; q, a) \leq \Psi(z_{j}, y ; q, a) - \frac{\Psi_q(z, y)}{\vphi(q)} \]
ce qui implique
\[ \max_{z\leq x}|E(z, y ; q, a)| \leq \max_{j\geq 0}|E(z_j, y ; q, a)| + \max_{j\geq 0}\frac{\Psi_q(z_j, y) - \Psi_q(z_{j+1}, y)}{\vphi(q)}.\]
Ainsi, lorsque~$2\leq y\leq x$, $Q\leq x$ et~$a_1, a_2\in\bfZ\smallsetminus\{0\}$ tels que~$(a_1, a_2)=1$, on a
\begin{align*}
& \sum_{q\leq Q\\(q, a_1a_2)=1}\max_{z\leq x}|E(z, y ; q, a_1\bar{a_2})| \\  \ll&\ z_J\log x + \sum_{0\leq j\leq J}\Bigg(\sum_{q\leq Q\\(q, a_1a_2)=1}|E(z_j, y ; q, a_1\bar{a_2})|\Bigg) + \sum_{q\leq Q}\max_{j\geq 0}\frac{\Psi_q(z_j, y) - \Psi_q(z_{j+1}, y)}{\vphi(q)} .\end{align*}
Puisque~$x^{3/5-2\ee}\leq z_J^{3/5-\ee}$ et~$x^{6/11-2\ee}\leq z_J^{6/11-\ee}$, le Théorème~\ref{thm-BFI-Sxy} fournit la majoration
\[ \sum_{q\leq Q\\(q, a_1a_2)=1}\big|E(z_j, y ; q, a_1\bar{a_2})\big| \ll \Psi(x, y) \big\{H(u)^{-\delta}(\log x)^{-A} + y^{-\delta} \big\} \]
lorsque~$0\leq j\leq J$, sous les conditions
\begin{equation}(\log x)^c\leq y\leq z_J^{1/c}\quad \text{et}\quad \label{cond-coro}
\begin{cases}Q=x^{3/5-2\ee}, \quad \max\{|a_1|, |a_2|\}\leq x^{\delta/2}, & \text{ou} \\ Q=x^{6/11-2\ee}, \quad |a_1|\leq x^{1-2\ee}, |a_2|\leq x^{\delta/2}.\end{cases} \end{equation}
D'autre part, on a
\[ J \ll (\log x)\Big\{H(u)^{\delta/2}(\log x)^{A/2} + y^{\delta/2}\Big\}, \qquad z_J \ll x^{1-\ee/3}, \]
et cela montre que, toujours sous les conditions~\eqref{cond-coro},
\[ \sum_{q\leq Q\\(q, a_1a_2)=1}\sum_{0\leq j\leq J}|E(z_j, y ; q, a_1\bar{a_2})| \ll \Psi(x, y)\big\{H(u)^{-\delta/2}(\log x)^{-A/2+1} + (\log x)y^{-\delta/2}\big\} .\]
Le theorem~4 de~\cite{Hild85} et le point~(i) du Lemme~\ref{bt05} (avec~$d=\Delta^j(1-\Delta^{-1})^{-1}$) impliquent
\[ \Psi_q(z_j, y) - \Psi_q(z_{j+1}, y) \leq \Psi(z_j, y) - \Psi(z_{j+1}, y) \leq \Psi(z_j(1-\Delta^{-1}), y) \ll \big\{\Delta^{-j}(\Delta-1)\big\}^\alpha\Psi(x, y) ,\]
où on rappelle que~$\alpha=\alpha(x, y)$ est défini en~\eqref{def-alpha}. On en déduit
\[ \sum_{q\leq Q}\max_{j\geq 0}\frac{\Psi_q(z_j, y) - \Psi_q(z_{j+1}, y)}{\vphi(q)} \ll \Psi(x, y)\big\{H(u)^{-\delta\alpha/2}(\log x)^{-A\alpha/2+1} + (\log x)y^{-\delta\alpha/2} \big\} .\]
Quitte à supposer~$c$ suffisamment grand pour avoir~$\alpha\geq 2/3$, on obtient pour tout~$A>0$ la majoration
\[ \sum_{q\leq Q\\(q, a_1a_2)=1}\max_{z\leq x}|E(z, y ; q, a_1\bar{a_2})| \ll \Psi(x, y)\big\{H(u)^{-\delta/3}(\log x)^{-A/3+1} +  (\log x) y^{-\delta/3} \big\} \]
sous les conditions~\eqref{cond-coro}. La condition~$y\leq z_J^{1/c}$ est impliquée par~$y\leq x^{1/(2c)}$ ; les valeurs de~$\ee$ et~$A$ étant arbitraires, on obtient la conclusion souhaitée.
\end{proof}

\section{Application au problème des diviseurs de Titchmarsh friable}

Pour énoncer le résultat de cette section, on reprend quelques notations de~\cite{FT90}. On définit
\begin{equation}\label{def-g-h-etc}
\begin{aligned}
A_0 := \prod_p\Big(1+\frac1{p(p-1)}\Big), &\qquad A_1 := \gamma-\sum_p\frac{\log p}{1+p(p-1)}, \\
g(n) := \prod_{p|n}\Big(1-\frac p{1+p(p-1)}\Big), &\qquad h(n) := \sum_{p|n}\frac{p^2\log p}{(p-1)[1+p(p-1)]}, \\
M_0(t) := &\frac{A_0}{t}\sum_{n\leq t}g(n), \\
M_1(t) := 2A_1M_0(t)+\frac{2A_0}{t}&\sum_{n\leq t}g(n)h(n)-\frac1t\int_1^tM_0(v)\dd v, \\
T_i(x, y) := x\int_{0-}^{+\infty}\rho(u&-v)\dd M_i(y^v) \quad (i\in\{0,1\}).
\end{aligned}
\end{equation}
\begin{prop}\label{divtit-friable}
\begin{enumerate}[(i)]
\item Il existe un réel~$c>0$ tel que lorsque~$(\log x)^c\leq y\leq\exp\{\sqrt{\log x\log_2x}\}$, on ait
\begin{equation}\label{tit-estim-bigu}
T(x, y) = C(\alpha)\Psi(x, y)\log x\Big\{1 + O\Big(\frac1u\Big)\Big\}
.\end{equation}
\item Pour la même constante~$c$, il existe~$\delta>0$ tel que pour tous~$\ee>0, A\geq 0$ fixés,
lorsque~$(x,y)$ est dans le domaine~$(H_\ee)$ et~$y\leq x^{1/c}$, on ait
\begin{equation}\label{tit-estim-smallu}\begin{aligned}
T(x, y) = T_0(x, &y)\log x + T_1(x, y) \\ &+ O_{\ee, A}\big(\Psi(x, y)\big\{H(u)^{-\delta}(\log x)^{-A} + \exp\{-(\log y)^{3/5-\ee}\}\big\}\big).
\end{aligned}\end{equation}
\end{enumerate}
\end{prop}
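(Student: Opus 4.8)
The plan is to reduce $T(x,y)$, via the Dirichlet hyperbola method, to sums of $\Psi(z,y;1,q)$ over moduli $q$ below $x^{3/5-\ee}$ in the single residue class~$1$, and then to split these into a main term handled by Lemma~\ref{bt05} and a remainder controlled by Theorem~\ref{thm-BFI-Sxy} and Corollary~\ref{coro-max}. Fix a small $\ee>0$ and put $D:=x^{3/5-2\ee}$, so that $x/D=x^{2/5+2\ee}<D$. From $\tau(n-1)=\sum_{de=n-1}1$, separating the pairs with $d\le D$ from those with $d>D$ (substituting $e=(n-1)/d$ in the latter), one obtains
\[
T(x,y)=\sum_{q\le D}\Psi(x,y;1,q)+\sum_{e\le x/D}\big(\Psi(x,y;1,e)-\Psi(eD,y;1,e)\big)+O(x^{3/5}),
\]
the $O(x^{3/5})$ collecting the rounding of the cut-off $eD$ and the spurious term $n=1$; this is $o(\Psi(x,y))$ once $c$ is large, since $\Psi(x,y)\ge x^{1-1/c+o(1)}$.

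Inserting $\Psi(z,y;1,q)=\Psi_q(z,y)/\vphi(q)+E(z,y;1,q)$, the error terms contribute at most $\sum_{q\le D}|E(x,y;1,q)|+\sum_{e\le x/D}\max_{z\le x}|E(z,y;1,e)|$, the maximum being needed because of the $e$-dependent argument $eD$. By~\eqref{estim-Sxy-BV} of Theorem~\ref{thm-BFI-Sxy} with $a_1=a_2=1$ (the coprimality condition then being vacuous, and $D<x^{3/5-\ee}$) for the first sum, and by Corollary~\ref{coro-max} with $a_1=a_2=1$ for the second (as $x/D<x^{3/5-\ee}$), both are $\ll_A\Psi(x,y)\{H(u)^{-\delta}(\log x)^{-A}+y^{-\delta}\}$. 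For part~(i) this is $o(\Psi(x,y)\log x/u)$ on taking $A\ge1$, since $\log x/u=\log y\to\infty$; for part~(ii), $y^{-\delta}\ll\exp\{-(\log y)^{3/5-\ee}\}$. This step — where the earlier sections enter — is what permits the range $(\log x)^c\le y$, as opposed to the narrower range in~\cite{FT90}.

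It remains to evaluate $\sum_{q\le D}\Psi_q(x,y)/\vphi(q)+\sum_{e\le x/D}\vphi(e)^{-1}(\Psi_e(x,y)-\Psi_e(eD,y))$, which I would treat following~\cite{FT90}. For part~(i), insert Lemma~\ref{bt05}(ii): splitting the $q$-range at $\omega(q)=W:=(\log_2x)^{c/2}$, one has $E_q\ll1$ when $\omega(q)\le W$, so $\Psi_q(x,y)=g_q(\alpha)\Psi(x,y)\{1+O(1/u)\}$ there, whereas the range $\omega(q)>W$ carries total weight $\sum\vphi(q)^{-1}$ super-polynomially small in $\log x$ and is disposed of by $\Psi_q\le\Psi(x,y)$; Lemma~\ref{bt05}(i) handles the $\Psi_e(eD,y)$ pieces, whose combined size is $O(\Psi(x,y))$. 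This leaves $\Psi(x,y)\big(\sum_{q\le D}g_q(\alpha)/\vphi(q)+\sum_{e\le x/D}g_e(\alpha)/\vphi(e)\big)$; each inner sum equals $C(\alpha)$ times the appropriate logarithm up to $O(1)$ — here $C(\alpha)$ is the value at $s=0$ of $\zeta(s+1)^{-1}\sum_m g_m(\alpha)\vphi(m)^{-1}m^{-s}$ — so their sum is $C(\alpha)\log x+O(1)$, yielding~\eqref{tit-estim-bigu}. For part~(ii), where $(x,y)\in H_\ee$, use Lemma~\ref{bt05}(iii) instead: the error $O(\Psi(x,y)/(L_\ee(y)g_m(\alpha)))$ sums to $O(\Psi(x,y)(\log x)^{O(1)}L_\ee(y)^{-1})$, of the size claimed in~\eqref{tit-estim-smallu}, and the main sum $\sum_m\Lambda_m(x,y)/\vphi(m)$ is unfolded through $\Lambda_m(x,y)=x\int_{0-}^{+\infty}\rho(u-v)\,\dd R_m(y^v)$, interchanging the $m$-sum with the integral and using $\sum_{m\le D,\,(m,n)=1}\vphi(m)^{-1}=A_0g(n)\log D+(\text{lower order terms in }A_1,h(n))+O(\cdots)$ averaged over $n\le y^v$; this produces $M_0(y^v)\log D$ and $M_1(y^v)$, and combining the two hyperbola pieces gives the main term $T_0(x,y)\log x+T_1(x,y)$.

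The main difficulty lies in this last step rather than in the error estimates: it requires uniform control of $\Psi_m(x,y)$ and of $\Psi(x/d,y)/\Psi(x,y)$ for $m,d$ up to $x^{3/5}$ and $y$ as small as $(\log x)^c$ — a range where few estimates are available — together with the delicate bookkeeping that pins down $C(\alpha)$, $T_0$, $T_1$, and $M_0$, $M_1$ of~\cite{FT90}, in particular the contribution of integers with unusually many prime factors and, for part~(ii), the uniformity near the lower edge of $H_\ee$. The reduction carried out in the first two paragraphs is, by comparison, routine once Theorem~\ref{thm-BFI-Sxy} and Corollary~\ref{coro-max} are in hand.
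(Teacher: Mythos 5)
Your proposal follows essentially the same route as the paper: the Dirichlet hyperbola method reduces $T(x,y)$ to sums of $\Psi(z,y;1,r)$ over moduli below the exponent-of-distribution threshold, Corollary~\ref{coro-max} (with $a_1=a_2=1$) absorbs the error terms, and Lemma~\ref{bt05} combined with the analysis of~\cite{FT90} evaluates the main terms, producing $C(\alpha)$ in part~(i) and $T_0$, $T_1$ in part~(ii). The only cosmetic differences are your asymmetric cut at $x^{3/5-2\ee}$ where the paper cuts symmetrically at $\sqrt{x}$ (so both hyperbola pieces run over $r\le\sqrt{x}$), and your splitting at $\omega(q)=W$, which the paper avoids by bounding $E_{r_y}(1+E_{r_y})\ll\omega(r_y)/(\log u)^2$ directly and summing over all $r\le\sqrt{x}$.
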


\begin{proof}[Démonstration que la Proposition~\ref{divtit-friable} implique le Théorème~\ref{tit-bfi}]
Soit~$c$ le réel donné par la Proposition~\ref{divtit-friable}. Lorsque~$y\geq x^{1/c}$, le théorème~1 de~\cite{FT90} s'applique
et entraine la validité de~\eqref{estim-tit-ft} dans ce domaine.

Lorsque~$\exp\{\sqrt{\log x\log_2 x}\}\leq y\leq x^{1/c}$, le théorème~2 de~\cite{FT90} fournit
\[ T_0(x, y)\log x + T_1(x, y) = \Psi(x, y)\log x\Big\{1+O\Big(\frac{\log(u+1)}{\log y}\Big)\Big\} .\]
Or~$(\log x)^{-1}+y^{-\delta}\ll\log(u+1)/\log y$, l'estimation~\eqref{estim-tit-ft} découle donc immédiatement du point~(ii)
de la Proposition~\ref{divtit-friable}.

Lorsque~$(\log x)^c\leq y\leq\exp\{\sqrt{\log x\log_2x}\}$, on a~$1/u\ll\log(u+1)/\log y$, le point~(i) de la Proposition~\ref{divtit-friable}
implique donc la validité de l'estimation~\eqref{estim-tit-ft}.
\end{proof}

\begin{proof}[Démonstration de la Proposition~\ref{divtit-friable}]
On a pour tout~$z\geq 1$ et~$m\in \bfN$,
\[ \tau(m) = \sum_{r|m\\r\leq z}1 + \sum_{r|m\\r<m/z}1 .\]
On a donc pour~$2\leq y\leq x$, en intervertissant les sommations,
\begin{align*}
T(x, y) := \sum_{n\in S(x, y)\\n>1}\tau(n-1) &= \sum_{r\leq z}\sum_{n\in S(x, y), n>1\\n\equiv 1\mod{r}}1
+ \sum_{r<(x-1)/z}\sum_{rz<n\leq x\\ P^+(n)\leq y\\n\equiv 1\mod{r}}1 \\
&= \sum_{r\leq z}\Psi(x, y ; 1, r) + \sum_{r\leq x/z}\{\Psi(x, y ; 1, r) - \Psi(rz, y ; 1, r)\}+ O(z)
\end{align*}
On choisit~$z=\sqrt{x}$. Le Corollaire~\ref{coro-max} assure l'existence de~$\delta, c>0$ tels que pour tout~$A\geq 0$,
\[ \sum_{r\leq\sqrt{x}}\Big|\Psi(x, y ; 1, r) - \frac1{\vphi(r)}\Psi_r(x, y)\Big| \ll \Psi(x, y)\{H(u)^{-\delta}(\log x)^{-A} + y^{-\delta}\} \]
\[ \sum_{r\leq\sqrt{x}}\Big|\Psi(r\sqrt{x}, y ; 1, r) - \frac1{\vphi(r)}\Psi_r(r\sqrt{x}, y)\Big| \ll \Psi(x, y)\{H(u)^{-\delta}(\log x)^{-A} + y^{-\delta}\} \]
uniformément lorsque~$(\log x)^c\leq y\leq x^{1/c}$, ce que l'on suppose dorénavant. On a donc
\begin{equation}\label{lien-T-Tt} T(x, y) = \sum_{r\leq \sqrt x} \frac{2\Psi_r(x, y)-\Psi_r(r\sqrt x, y)}{\vphi(r)}
 + O(\Psi(x, y)\{H(u)^{-\delta}(\log x)^{-A}+y^{-\delta}\}) .\end{equation}
On note~$\tilde{T}(x, y)$ le terme principal du membre de droite.

On suppose dans un premier temps~$(\log x)^c\leq y\leq\exp\{\sqrt{\log x\log_2x}\}$.
Pour~$r\leq x$, on a~$\omega(r)\leq \sqrt{y}$ quitte à supposer~$c\geq 2$. On note
\[ r_y := \prod_{p^\nu||r\\p\leq y}p^\nu .\]
Le point~(ii) du Lemme~\ref{bt05} fournit pour tout~$r$,
\[ \Psi_r(x, y) = \Psi\sb{r\sb y}(x, y) = g\sb{r\sb y}(\alpha)\Psi(x, y)\Big\{1 + O\Big(\frac{E\sb{r\sb y}(1+E\sb{r\sb y})}u\Big)\Big\} \]
où~$E_m$ vérifie
\[ E_m(1+E_m) \ll \frac{\exp\Big\{4\log(\omega(m)+2)\frac{\log u}{\log y}\Big\}}{(\log u)^2} \ll \frac{\omega(m)}{(\log u)^2} \]
quitte à supposer~$c$ suffisamment grand. On a alors
\[ \sum_{r\leq\sqrt{x}}\frac{E\sb{r\sb y}(1+E\sb{r\sb y})}{\vphi(r)}\ll\frac1{(\log u)^2}\sum_{r\leq\sqrt{x}}\frac{\omega(r_y)}{\vphi(r)}
\ll \frac1{(\log u)^2}\sum_{p\leq y}\frac1{p-1}\sum_{r\leq\sqrt{x}/p}\frac1{\vphi(r)} \ll \frac{\log_2y\log x}{(\log u)^2}
\ll \log x .\]
Par ailleurs, le point~(i) du Lemme~\ref{bt05} et une intégration par parties fournissent
\[ \sum_{r\leq\sqrt{x}}\frac{\Psi_r(r\sqrt{x},y)}{\vphi(r)} \ll \Psi(x,y)x^{-\alpha/2}\sum_{r\leq\sqrt{x}}\frac{r^{\alpha}}{\vphi(r)}
\ll \Psi(x, y) .\]
Ainsi,
\[ \tilde T(x,y)=2\Psi(x,y)\sum_{r\leq\sqrt{x}}\frac{g\sb{r\sb y}(\alpha)}{\vphi(r)}+O(\Psi(x,y)\log y) .\]
Une analyse classique, similaire par exemple à la démonstration du Lemme~3.1 de~\cite{FT90}, fournit uniformément pour~$(\log x)^2\leq y\leq x$
\begin{align*}
\sum_{r\leq \sqrt{x}}\frac{g\sb{r\sb y}(\alpha)}{\vphi(r)} &= \prod_{p\leq y}\Big(1-\frac{p^{-\alpha}-p^{-1}}{p-1}\Big)
\prod_{p>y}\Big(1+\frac1{p(p-1)}\Big)\log\sqrt{x} + O(1) \\
&= C(\alpha)\log\sqrt{x} + O(1)
\end{align*}
On a donc~$\tilde T(x, y) = C(\alpha)\Psi(x,y)\log x + O\big(\Psi(x, y)\log y\big)$
et en injectant cela dans~\eqref{lien-T-Tt} on obtient l'estimation~\eqref{tit-estim-bigu}.

On suppose maintenant~$(x, y)\in(H_\ee)$. Le point~(iii) du Lemme~\ref{bt05} fournit
\[ \Psi_r(x, y) = \Psi\sb{r\sb y}(x, y) = \Lambda\sb{r\sb y}(x, y) + O(\Psi(x, y) \exp\{-(\log y)^{3/5-\ee/3}\}(g\sb{r\sb y}(\alpha))^{-1}), \]
on a donc en supposant~$\ee$ suffisamment petit,
\[ \tilde T(x, y) = \sum_{r\leq\sqrt x}\frac{2\Lambda\sb{r\sb y}(x,y)-\Lambda\sb{r\sb y}(r\sqrt x,y)}{\vphi(r)} + O(\Psi(x, y)\exp\{-(\log y)^{3/5-\ee}\}) .\]
En utilisant la définition de~$\Lambda_m(x, y)$, on obtient
\[ \tilde T(x, y) = x\int_{-\infty}^\infty \rho(u-v)\dd W(y^v ; x, y) + O(\Psi(x, y)\exp\{-(\log y)^{3/5-\ee}\}) \]
avec
\[ W(t ; x, y):=\frac1t\sum_{n\leq t}\Big\{2\sum_{r\leq\sqrt x\\(r\sb y,n)=1}\frac1{\vphi(r)}-\sum_{n\sqrt xt^{-1}<r\leq\sqrt x\\(r\sb y,n)=1}\frac1{\vphi(r)}\Big\}.\]
On a~$(r_y, n)=1\Leftrightarrow(r, n_y)=1$, de sorte que le lemme~3.1 de~\cite{FT90} fournit pour~$t\leq x$
\[ W(t ; x, y) = \frac{A_0}t\sum_{n\leq t}g(n_y)\Big\{\log x + 2h(n_y) + 2A_1 - \log\Big(\frac t n\Big)\Big\} + O(x^{-1/3+\ee}) ,\]
où les quantités~$g(n), h(n), A_0$ et~$A_1$ sont définies par~\eqref{def-g-h-etc}.
Pour tout~$z\in\bfC$ avec~$|z|\leq 1$, on pose~$f_z(n) := g(n)\exp\{zh(n)\}$ qui est une fonction entière de~$z$. Pour tout~$n\in\bfN$, on a
\[ |f_{z}(n)| \leq \prod_{p|n}\Big(1-\frac{p}{1+p(p-1)}\Big)\exp\Big(\frac{p^2\log p}{(p-1)[1+p(p-1)]}\Big)
\ll \prod_{p|n}\Big(1+\frac{\log p}{p}\Big) = O((\omega(n)+1)\log(\omega(n)+2)) .\]
On rappelle l'inégalité~$\omega(n)\ll\log (n+1)/\log_2(n+2)$ ({\it cf.}~\cite[théorème~I.5.3]{Tene2007}).

Soient~$F_{z, y}(s)$ et~$F_z(s)$ les séries de Dirichlet associées respectivement aux fonctions multiplicatives~$n\mapsto f_{z}(n_y)$ et~$n\mapsto f_z(n)$.
Elles sont absolument convergentes pour~$\Re(s)>1$.
Avec~$\kappa := 1+1/\log x$, une formule de Perron effective (par exemple~\cite[corollaire~II.2.4]{Tene2007}) fournit pour~$t\leq x$,
\[ \sum_{n\leq t}f_z(n_y) = \frac1{2\pi i}\int_{\kappa-ix^2}^{\kappa+ix^2}F_{z,y}(s)\frac{t^s\dd s}s + O(1) \]
\[ \sum_{n\leq t}f_z(n) = \frac1{2\pi i}\int_{\kappa-ix^2}^{\kappa+ix^2}F_z(s)\frac{t^s\dd s}s + O(1). \]
On vérifie que l'on a uniformément pour~$\sigma>1$,
\[ F_{z,y}(s)=F_z(s)\prod_{p>y}\Big(1+O\Big(\frac{\log p}{p^{\sigma+1}}+\frac1{p^{2\sigma}}\Big)\Big) .\]
Le théorème des nombres premiers fournit donc
\[ F_{z,y}(s) = F_z(s)\Big\{1+O\Big(\frac1{y^{2\sigma-1}}\Big)\Big\} \]
et on obtient pour~$1\leq t\leq x$ l'estimation
\[ \sum_{n\leq t}f_z(n_y) = \sum_{n\leq t}f_z(n) + O\Big(\frac{t}{y}\int_{-x^2}^{x^2}\sum_{n\geq 1}\frac{|f_z(n)|}{n^\kappa}\frac{\dd\tau}{1+|\tau|}+1\Big) .\]
Lorsque~$t\leq y$, les deux termes principaux sont égaux : le terme d'erreur est donc~$O(t(\log x)^3/y)$.
De même que dans~\cite{FT90}, le cas~$z=0$ fournit directement
\begin{equation}\label{approx-g} \frac1t\sum_{n\leq t}g(n_y) = \frac1t\sum_{n\leq t}g(n) + O\Big(\frac{(\log x)^3}{y}\Big)
\qquad(t\leq x),\end{equation}
tandis que les formules de Cauchy
\[ \sum_{n\leq t}g(n_y)h(n_y) = \frac1{2\pi i}\oint_{|z|=1}\sum_{n\leq t}f_z(n_y)\frac{\dd z}{z^2}, \qquad
\sum_{n\leq t}g(n)h(n) = \frac1{2\pi i}\oint_{|z|=1}\sum_{n\leq t}f_{z}(n)\frac{\dd z}{z^2} \]
fournissent
\begin{equation}\label{approx-gh} \frac1t\sum_{n\leq t}g(n_y)h(n_y) = \frac1t\sum_{n\leq t}g(n)h(n) + O\Big(\frac{(\log x)^3}{y}\Big) 
\qquad(t\leq x).\end{equation}
Enfin, par une intégration par parties à partir de la formule~\eqref{approx-g}, on a
\begin{equation}\label{approx-glog} \frac1t\sum_{n\leq t}g(n_y)\log\big(\frac t n\big) = \frac1t\sum_{n\leq t}g(n)\log\big(\frac t n\big)
+ O\Big(\frac{(\log x)^4}{y}\Big) \qquad(t\leq x).\end{equation}
Les formules~\eqref{approx-g}, \eqref{approx-gh} et~\eqref{approx-glog} fournissent pour~$t\leq x$
\begin{align*}
W(t ; x, y) &= \frac{A_0}t\sum_{n\leq t}g(n)\Big\{\log x + 2h(n) + 2A_1 - \log\Big(\frac t n\Big)\Big\} + O(x^{-1/3+\ee} + (\log x)^4y^{-1}) \\
&= M_0(t)\log x + M_1(t) + O(x^{-1/3+\ee} + (\log x)^4y^{-1})
,\end{align*}
de sorte que l'on obtient
\[ \tilde T(x, y) = T_0(x, y)\log x + T_1(x, y) + O(\Psi(x, y)\exp\{-(\log y)^{3/5-\ee}\}) \]
puisque~$x^{-1/3+\ee} + (\log x)^4y^{-1} \ll \exp\{-(\log y)^{3/5-\ee}\}$.
En injectant cela dans~\eqref{lien-T-Tt}, on obtient le point~(ii) de la Proposition~\ref{divtit-friable}.
\end{proof}

\bibliographystyle{amsalpha}
\bibliography{bffi-friable-v3}

\end{document}